\newcommand{\df}{\dfrac}
\newcommand{\tf}{\tfrac}
\renewcommand{\(}{\left\(}
\renewcommand{\)}{\right\)}
\renewcommand{\[}{\left\[}
\renewcommand{\]}{\right\]}
\numberwithin{equation}{section}
 \theoremstyle{plain}
\newtheorem{theorem}{Theorem}[section]
\def\proof{\@ifnextchar[{\@oproof}{\@nproof}}
\def\@oproof[#1][#2]{\trivlist\item[\hskip\labelsep\textit{#2 Proof of\
#1.}~]\ignorespaces}
\def\@nproof{\trivlist\item[\hskip\labelsep\textit{Proof.}~]\ignorespaces}
\begin{document}
\title{Analogues of the general theta transformation formula}
\author{Atul Dixit}\thanks{2010 \textit{Mathematics Subject Classification.} Primary 11M06, Secondary 33C15.\\
\textit{Keywords and phrases.} Riemann $\Xi$-function, hypergeometric function, theta transformation formula, Bessel function.}
\address{Department of Mathematics, University of Illinois, 1409 West Green Street, Urbana, IL 61801, USA} \email{aadixit2@illinois.edu}
\maketitle
\begin{abstract}
A new class of integrals involving the confluent hypergeometric function ${}_1F_{1}(a;c;z)$ and the Riemann $\Xi$-function is considered. It generalizes a class containing some integrals of S.~Ramanujan, G.H.~Hardy and W.L.~Ferrar and gives as by-products, transformation formulas of the form $F(z,\alpha)=F(iz,\beta)$, where $\alpha\beta=1$. As particular examples, we derive an extended version of the general theta transformation formula and generalizations of certain formulas of Ferrar and Hardy. A one-variable generalization of a well-known identity of Ramanujan is also given. We conclude with a generalization of a conjecture due to Ramanujan, Hardy and J.E.~Littlewood involving infinite series of M\"obius functions.
\end{abstract}
\section{Introduction}
For $\alpha\beta=\pi$, Re $\alpha^2, \beta^2>0$, the well-known transformation formula for the theta function $\varphi(q)=\sum_{n=-\infty}^{\infty}q^{n^2}$, where $|q|<1$, is given by \cite[p.~43, Entry 27(i)]{berndt0}
\begin{equation*}
\sqrt{\alpha}\varphi\left(e^{-\alpha^2}\right)=\sqrt{\beta}\varphi\left(e^{-\beta^2}\right).
\end{equation*} 
It can also be written alternatively, for $\alpha\beta=1$, in the form
\begin{align}\label{ttum}
\sqrt{\alpha}\bigg(\frac{1}{2\alpha}-\sum_{n=1}^{\infty}e^{-\pi\alpha^2n^2}\bigg)=\sqrt{\beta}\bigg(\frac{1}{2\beta}-\sum_{n=1}^{\infty}e^{-\pi\beta^2n^2}\bigg).
\end{align}
In \cite[p.~36]{titch}, one finds the following integral evaluation
\begin{equation}\label{titchint}
\int_{0}^{\infty}\frac{\Xi(t)}{t^2+\tfrac{1}{4}}\cos xt\, dt=\frac{\pi}{2}\left(e^{\frac{x}{2}}-2e^{-\frac{x}{2}}\sum_{n=1}^{\infty}e^{-\pi n^2e^{-2x}}\right),
\end{equation}
where $\Xi(t)$ is the Riemann $\Xi$-function defined by 
\begin{equation}\label{xif}
\Xi(t):=\xi(\tfrac{1}{2}+it),
\end{equation}
the $\xi(s)$ being the Riemann $\xi$-function
\begin{equation}\label{xi}
\xi(s):=\tfrac{1}{2}s(s-1)\pi^{-\tfrac{1}{2}s}\Gamma(\tfrac{s}{2})\zeta(s),
\end{equation}
and $\zeta(s)$ being the Riemann zeta function (see Section 2). Replacing $t$ by $t/2$ on the left-hand side of (\ref{titchint}), then setting $x=\log\alpha$ and simplifying, gives for $\alpha\beta=1$, 
\begin{equation}\label{titchint1}
\frac{2}{\pi}\int_{0}^{\infty}\frac{\Xi(t/2)}{1+t^2}\cos\bigg(\frac{1}{2}t\log\alpha\bigg)\, dt=\sqrt{\beta}\bigg(\frac{1}{2\beta}-\sum_{n=1}^{\infty}e^{-\pi\beta^2n^2}\bigg).
\end{equation}
Now the invariance of the integral on the left side of (\ref{titchint1}) under the map $\alpha\to\beta$ proves (\ref{ttum}). Such integrals involving the Riemann $\Xi$-function, which are invariant under certain maps, can be used to prove a variety of transformation formulas. A beautiful example illustrating this phenomenon is found on page 220 in Ramanujan's lost notebook \cite{lnb}, with the first proofs being given in \cite{bcbad}. This formula which Ramanujan describes as `curious' is given below.
\begin{theorem}\label{entry1} Define
\begin{equation*}\label{w1.27}
\lambda(x):=\psi(x)+\df{1}{2x}-\log x,
\end{equation*}
where
\begin{equation}\label{w1.15b}
\psi(x):=\df{\Gamma^\prime(x)}{\Gamma(x)}=-\gamma-\sum_{m=0}^{\infty}\left(\dfrac{1}{m+x}-\dfrac{1}{m+1}\right)
\end{equation}
is the logarithmic derivative of the Gamma function.
Let the Riemann's functions $\Xi(t)$ and $\xi(s)$ be defined as in \textup{(\ref{xif})} and \textup{(\ref{xi})} respectively.
If $\alpha$ and $\beta$ are positive numbers such that $\alpha\beta=1$, then
\begin{align*}
\sqrt{\alpha}\bigg(\frac{\gamma-\log(2\pi\alpha)}{2\alpha}&+\sum_{k=1}^{\infty}\lambda(k\alpha)\bigg)
=\sqrt{\beta}\bigg(\frac{\gamma-\log(2\pi\beta)}{2\beta}+\sum_{k=1}^{\infty}\lambda(k\beta)\bigg)\nonumber\\
&=-\frac{1}{\pi^{3/2}}\int_0^{\infty}\left|\Xi\left(\frac{1}{2}t\right)\Gamma\left(\frac{-1+it}{4}\right)\right|^2
\frac{\cos\left(\tfrac{1}{2}t\log\alpha\right)}{1+t^2}\, dt,
\end{align*}
where $\gamma$ denotes Euler's constant.
\end{theorem}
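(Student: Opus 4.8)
\textit{Proof proposal.} The plan is to treat the integral on the right as the master object and to show directly that it equals the first of the three quantities, $F(\alpha):=\sqrt{\alpha}\bigl(\tfrac{\gamma-\log(2\pi\alpha)}{2\alpha}+\sum_{k=1}^{\infty}\lambda(k\alpha)\bigr)$. Once that is done the equality $F(\alpha)=F(\beta)$ is free: under $\alpha\mapsto\beta=1/\alpha$ one has $\log\beta=-\log\alpha$, hence $\cos(\tfrac12 t\log\alpha)=\cos(\tfrac12 t\log\beta)$, so the integral is literally invariant. To link the series with the integral I would route everything through a contour integral of a product of zeta-functions on the critical line.

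First I would record a Mellin transform for $\lambda$. Differentiating Binet's second formula for $\log\Gamma$ gives $\lambda(x)=-2\int_0^{\infty}t\,dt\big/\big((x^2+t^2)(e^{2\pi t}-1)\big)$ for $x>0$; inserting $\int_0^\infty x^{s-1}(x^2+t^2)^{-1}\,dx=\tfrac{\pi}{2}t^{s-2}/\sin(\pi s/2)$ and $\int_0^\infty t^{s-1}(e^{2\pi t}-1)^{-1}\,dt=(2\pi)^{-s}\Gamma(s)\zeta(s)$ and interchanging the two integrals (legitimate for $1<\Re s<2$, where everything is absolutely integrable) yields
\begin{equation*}
\int_0^\infty \lambda(x)\,x^{s-1}\,dx=-\frac{\pi\,\Gamma(s)\,\zeta(s)}{(2\pi)^s\,\sin(\pi s/2)},\qquad 1<\Re s<2 .
\end{equation*}
Mellin inversion, followed by summing over $k$ — justified since $\sum_k k^{-c}<\infty$ for $c\in(1,2)$ and $\Gamma(s)/\sin(\pi s/2)\ll e^{-\pi|\Im s|}$ on vertical lines — gives $\sum_{k\ge1}\lambda(k\alpha)=-\tfrac1{2\pi i}\int_{(c)}\pi\Gamma(s)\zeta^2(s)\alpha^{-s}\big/\big((2\pi)^s\sin(\pi s/2)\big)\,ds$. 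Next I would move the line of integration from $\Re s=c$ down to $\Re s=\tfrac12$. The only pole crossed is the double pole of $\zeta^2(s)$ at $s=1$, and a short Laurent-expansion computation shows its residue is exactly $\tfrac{\gamma-\log(2\pi\alpha)}{2\alpha}$ — precisely the ``extra'' term in the theorem; the horizontal pieces vanish because $\Gamma(s)/\sin(\pi s/2)\ll e^{-\pi|\Im s|}$ overwhelms the polynomial growth of $\zeta^2$. After multiplying by $\sqrt{\alpha}$ this leaves
\begin{equation*}
F(\alpha)=-\frac1{2\pi i}\int_{(1/2)}\frac{\pi\,\Gamma(s)\,\zeta^2(s)}{(2\pi)^s\,\sin(\pi s/2)}\,\alpha^{1/2-s}\,ds .
\end{equation*}

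The final step is to recognize this integrand on $\Re s=\tfrac12$ as the $\Xi$-kernel. Applying the functional equation $\zeta(s)=2^s\pi^{s-1}\sin(\pi s/2)\Gamma(1-s)\zeta(1-s)$ together with $\Gamma(s)\Gamma(1-s)=\pi/\sin\pi s$, the factors $\sin(\pi s/2)$, $2^s$, $\pi^s$ all cancel and the integrand collapses to $\pi\,\zeta(s)\zeta(1-s)\,\alpha^{1/2-s}/\sin\pi s$. Parametrizing $s=\tfrac12+\tfrac{it}{2}$, so that $\Xi(\tfrac t2)=\xi(s)$ and $\Gamma(\tfrac{-1+it}{4})=\Gamma(\tfrac{s-1}{2})$, and using $\xi(s)=\tfrac12 s(s-1)\pi^{-s/2}\Gamma(s/2)\zeta(s)$, the reflection formula $\Gamma(z)\Gamma(-z)=-\pi/(z\sin\pi z)$ applied twice, and the elementary $1+t^2=4s(1-s)$, one checks the clean identity $\bigl|\Xi(\tfrac t2)\Gamma(\tfrac{-1+it}{4})\bigr|^2(1+t^2)^{-1}=\tfrac{\pi^{3/2}}{2}\,\zeta(s)\zeta(1-s)/\sin\pi s$. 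Substituting this back, and using that the integrand is even in $t$ so that $\alpha^{1/2-s}=\alpha^{-it/2}$ folds into $\cos(\tfrac12 t\log\alpha)$ on $(0,\infty)$, produces exactly $-\pi^{-3/2}\int_0^\infty|\Xi(\tfrac t2)\Gamma(\tfrac{-1+it}{4})|^2\cos(\tfrac12 t\log\alpha)(1+t^2)^{-1}\,dt$, which is the second equality.

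I expect the only genuine work to be analytic bookkeeping rather than anything conceptual: justifying the two interchanges of integration (painless, since all integrands are absolutely integrable in the relevant strips because of the $e^{-\pi|\Im s|}$ decay of $\Gamma/\sin$) and, above all, the contour shift — truncate at height $T$, bound $\zeta$ on $\tfrac12\le\Re s\le c$ by the convexity estimate, and let $T\to\infty$. One should also note that the final $\Xi$-integral converges absolutely: $|\Gamma(\tfrac{-1+it}{4})|^2$ and $|\Xi(\tfrac t2)|^2$ each decay like $e^{-\pi t/4}$ up to polynomial factors, so the integrand is $O(t^{1/2+\varepsilon}e^{-\pi t/2})$. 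The whole argument is, of course, the instance of the paper's general transformation corresponding to the special value of the ${}_1F_1$-parameter that turns the kernel into $|\Gamma(\tfrac{-1+it}{4})|^2/(1+t^2)$.
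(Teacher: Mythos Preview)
Your argument is sound: the Mellin transform of $\lambda$ via Binet's second formula, the summation over $k$ introducing $\zeta^2(s)$, the contour shift across the double pole at $s=1$ (whose residue you correctly identify as $\tfrac{\gamma-\log(2\pi\alpha)}{2\alpha}$), and the Gamma-function algebra collapsing the integrand on $\Re s=\tfrac12$ to $\tfrac{\pi^{3/2}}{2}\zeta(s)\zeta(1-s)/\sin\pi s$ all check out line by line. The analytic justifications you sketch (absolute convergence from the $e^{-\pi|\Im s|}$ decay of $\Gamma(s)/\sin(\pi s/2)$, convexity bounds for $\zeta$ on the horizontal segments) are the right ones and suffice.

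However, there is nothing to compare against: this paper does \emph{not} prove Theorem~\ref{entry1}. It is quoted in the introduction solely as a motivating example of the $F(\alpha)=F(\beta)$ phenomenon, with the explicit remark that the first proofs appear in~\cite{bcbad}. The present paper's own contributions begin with Theorem~\ref{thetasym}.

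One small correction to your closing sentence. The main framework developed here, namely $\int_0^\infty f(t/2)\,\Xi(t/2)\,\nabla(\alpha,z,\tfrac{1+it}{2})\,dt$, carries a \emph{single} factor of $\Xi$; Theorem~\ref{entry1} involves $|\Xi(t/2)|^2$ and therefore does not arise as a special value of the ${}_1F_1$-parameter in that family. It is instead the case $\eta=0$ of the ``squared'' integrals~(\ref{lint}) recalled in the concluding remarks and treated in the author's earlier paper~\cite{transf}.
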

Ramanujan \cite{riemann} was the first one to employ the idea of using integrals involving the Riemann $\Xi$-function to prove transformation formulas. After Ramanujan, N.S.~Koshliakov made a fruitful use of this technique in several of his papers. Recently, this technique was further explored and extended by the author in \cite{series, dixit, transf, cartirxf} to obtain more general transformation formulas of the form $F(z,\alpha)=F(z,\beta)$ or their character analogues $F(z, \alpha,\chi)=F(-z, \beta,\overline{\chi})=F(-z,\alpha,\overline{\chi})=F(z,\beta,\chi)$, where $\alpha\beta=1$, in addition to the transformation formulas of the above type, i.e., of the form $F(\alpha)=F(\beta)$.

This paper focuses on formulas of the type $F(z,\alpha)=F(iz,\beta)$, where $\alpha\beta=1$. This work was motivated by the search for an integral representation, similar to (\ref{titchint}), for both sides of the following generalization of (\ref{ttum}), valid for $\alpha\beta=1$ and $z\in\mathbb{C}$,
\begin{align}\label{eqsym0}
\sqrt{\alpha}\left(\frac{e^{-\frac{z^2}{8}}}{2\alpha}-e^{\frac{z^2}{8}}\sum_{n=1}^{\infty}e^{-\pi\alpha^2n^2}\cos(\sqrt{\pi}\alpha nz)\right)
&=\sqrt{\beta}\left(\frac{e^{\frac{z^2}{8}}}{2\beta}-e^{-\frac{z^2}{8}}\sum_{n=1}^{\infty}e^{-\pi\beta^2n^2}\cos(i\sqrt{\pi}\beta nz)\right),
\end{align}
which is of the form $F(z,\alpha)=F(iz,\beta)$. In \cite[p.~252-253, Entry 7]{berndtn1}, this identity can be found in a slightly different form. Another version of (\ref{eqsym0}), given in terms of Ramanujan's theta function $f(a,b)=\sum_{n=-\infty}^{\infty}a^{n(n+1)/2}b^{n(n-1)/2}$, $|ab|<1$, and valid for $\alpha\beta=\pi$, is \cite[p.~36, Entry 20]{berndt0}
\begin{equation*}
e^{\tfrac{z^2}{4}}\sqrt{\alpha}f\left(e^{-\alpha^{2}+iz\alpha}, e^{-\alpha^{2}-iz\alpha}\right)=\sqrt{\beta}f\left(e^{-\beta^{2}+z\beta}, e^{-\beta^{2}-z\beta}\right).
\end{equation*}
For more details, see \cite{bgkt}.

Formulas of the type $F(z,\alpha)=F(iz,\beta)$ are generated through a one-variable generalization of integrals of the form $\int_{0}^{\infty}f\left(\frac{t}{2}\right)\Xi\left(\frac{t}{2}\right)\cos\left(\tf{1}{2}t\log\alpha\right)\, dt$, whose special cases were studied by Ramanujan, Koshliakov, G.H.~Hardy and W.L.~Ferrar. See \cite{series} for some examples. This one-variable generalization is of a different kind than those studied in \cite{transf, cartirxf} in that, the variable $z$ does not occur in the argument of the Riemann $\Xi$-function but rather in a function which generalizes the $\cos\left(\tfrac{1}{2}t\log\alpha\right)$ term. This function, which we denote by $\nabla(x,z,s)$, involves the confluent hypergeometric function ${}_1F_{1}(a;c;z)$ \cite[p.~188]{aar}, and is defined by
\begin{equation}\label{nabla}
\nabla(x,z,s):=\rho(x,z,s)+\rho(x,z,1-s),
\end{equation}
where 
\begin{align}\label{rho}
\rho(x,z,s)&:=x^{\frac{1}{2}-s}e^{-\frac{z^2}{8}}{}_1F_{1}\left(\frac{1-s}{2};\frac{1}{2};\frac{z^2}{4}\right),\nonumber\\
{}_1F_{1}(a;c;z)&=\sum_{n=0}^{\infty}\frac{(a)_{n}z^{n}}{(c)_{n}n!},
\end{align}
with $(a)_{n}$ being the rising factorial defined by
\begin{equation*}
(a)_{n}:=a(a+1)\cdots (a+n-1)=\frac{\Gamma(a+n)}{\Gamma(a)},
\end{equation*}
for $a\in\mathbb{C}$. It is easy to see that
\begin{equation*}
\nabla\left(\alpha,0,\frac{1+it}{2}\right)=\alpha^{-\frac{it}{2}}+\alpha^{\frac{it}{2}}=2\cos\left(\frac{1}{2}t\log\alpha\right).
\end{equation*}
The general form of the integrals giving rise to transformation formulas of the type $F(z,\alpha)=F(iz,\beta)$ is given by
\begin{equation}\label{sp0}
F(z,\alpha):=\int_{0}^{\infty}f\left(\frac{t}{2}\right)\Xi\left(\frac{t}{2}\right)\nabla\left(\alpha,z,\frac{1+it}{2}\right)\, dt,
\end{equation}
where $f(t)$ is of the form $f(t)=\phi(it)\phi(-it)$ and $\phi$ is analytic in $t$ as a function of a real variable.
To see this, recall Kummer's first transformation for the confluent hypergeometric function \cite[p.~191, Equation (4.1.11)]{aar}
\begin{equation}\label{kft}
{}_1F_{1}(a;c;z)=e^z{}_1F_{1}(c-a;c;-z).
\end{equation}
Using (\ref{kft}) in the second equality below and the fact that $\alpha\beta=1$, we see that
{\allowdisplaybreaks\begin{align}\label{sp1}
\nabla\left(\beta,iz,\frac{1+it}{2}\right)&=\rho\left(\beta,iz,\frac{1+it}{2}\right)+\rho\left(\beta,iz,\frac{1-it}{2}\right)\nonumber\\
&=\beta^{\tfrac{1}{2}-\tfrac{1+it}{2}}e^{\frac{z^2}{8}}{}_1F_{1}\left(\frac{1-it}{4};\frac{1}{2};-\frac{z^2}{4}\right)+\beta^{\tfrac{1}{2}-\tfrac{1-it}{2}}e^{\frac{z^2}{8}}{}_1F_{1}\left(\frac{1+it}{4};\frac{1}{2};-\frac{z^2}{4}\right)\nonumber\\
&=\alpha^{\tfrac{it}{2}}e^{-\frac{z^2}{8}}{}_1F_{1}\left(\frac{1+it}{4};\frac{1}{2};\frac{z^2}{4}\right)+\alpha^{-\tfrac{it}{2}}e^{-\frac{z^2}{8}}{}_1F_{1}\left(\frac{1-it}{4};\frac{1}{2};\frac{z^2}{4}\right)\nonumber\\
&=\rho\left(\alpha,z,\frac{1-it}{2}\right)+\rho\left(\alpha,z,\frac{1+it}{2}\right)\nonumber\\
&=\nabla\left(\alpha,z,\frac{1+it}{2}\right).
\end{align}}%
Then (\ref{sp0}) and (\ref{sp1}) imply that $F(z,\alpha)=F(iz,\beta)$.

We explicitly evaluate the integrals in (\ref{sp0}) for several choices of the function $f(t)$. These give rise to new analogues of the general theta transformation formula (\ref{eqsym0}). We begin by stating the general theta transformation formula itself obtained through such an integral. Its extended version is as follows.
\begin{theorem}\label{thetasym}
Let $z\in\mathbb{C}$. If $\alpha$ and $\beta$ are positive numbers such that $\alpha\beta=1$, then
\begin{align}\label{eqsym}
\sqrt{\alpha}\left(\frac{e^{-\frac{z^2}{8}}}{2\alpha}-e^{\frac{z^2}{8}}\sum_{n=1}^{\infty}e^{-\pi\alpha^2n^2}\cos(\sqrt{\pi}\alpha nz)\right)
&=\sqrt{\beta}\left(\frac{e^{\frac{z^2}{8}}}{2\beta}-e^{-\frac{z^2}{8}}\sum_{n=1}^{\infty}e^{-\pi\beta^2n^2}\cosh(\sqrt{\pi}\beta nz)\right)\nonumber\\
&=\frac{1}{\pi}\int_{0}^{\infty}\frac{\Xi(t/2)}{1+t^2}\nabla\left(\alpha,z,\frac{1+it}{2}\right)\, dt.
\end{align}
\end{theorem}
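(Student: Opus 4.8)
The plan is to evaluate in closed form the integral on the far right of (\ref{eqsym}), obtaining its first member, and then to read off the middle member from the identity (\ref{sp1}). For the first step I would write $\Xi(t/2)=\xi\!\left(\tfrac{1+it}{2}\right)$ and substitute $s=\tfrac{1+it}{2}$, so that $t\in(0,\infty)$ corresponds to $s$ on the line $\operatorname{Re}s=\tfrac12$, with $dt=-2i\,ds$ and $1+t^2=4s(1-s)$. Using $\xi(s)=\xi(1-s)$, the symmetry of $s(1-s)$, and the splitting $\nabla(\alpha,z,s)=\rho(\alpha,z,s)+\rho(\alpha,z,1-s)$ from (\ref{nabla}), one folds the half-line integral into the full vertical line and finds
\begin{align*}
\frac{1}{\pi}\int_{0}^{\infty}\frac{\Xi(t/2)}{1+t^2}\,\nabla\!\left(\alpha,z,\tfrac{1+it}{2}\right)dt&=\frac{1}{2\pi i}\int_{(1/2)}\frac{\xi(s)}{s(1-s)}\,\rho(\alpha,z,s)\,ds\\
&=-\frac{1}{2}\cdot\frac{1}{2\pi i}\int_{(1/2)}\pi^{-s/2}\,\Gamma\!\left(\tfrac s2\right)\zeta(s)\,\rho(\alpha,z,s)\,ds,
\end{align*}
the last equality being $\xi(s)/(s(1-s))=-\tfrac12\pi^{-s/2}\Gamma(s/2)\zeta(s)$, immediate from (\ref{xi}).

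Next I would push the line of integration to $\operatorname{Re}s=c$ for some $c>1$. In the strip $\tfrac12<\operatorname{Re}s<c$ the integrand has exactly one pole, the simple pole of $\zeta(s)$ at $s=1$, with residue $\pi^{-1/2}\Gamma(1/2)\,\rho(\alpha,z,1)=\rho(\alpha,z,1)=\alpha^{-1/2}e^{-z^2/8}$ (using ${}_1F_{1}(0;\tfrac12;\cdot)=1$ in (\ref{rho})). This residue contributes the term $\tfrac12\alpha^{-1/2}e^{-z^2/8}=\sqrt\alpha\cdot\tfrac{e^{-z^2/8}}{2\alpha}$, the first summand on the right of (\ref{eqsym}). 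On the line $\operatorname{Re}s=c$ I would expand $\zeta(s)=\sum_{n\ge1}n^{-s}$ and interchange summation and integration; since $\pi^{-s/2}n^{-s}\alpha^{-s}=(\sqrt\pi\,n\alpha)^{-s}$ and $\rho(\alpha,z,s)=\alpha^{1/2-s}e^{-z^2/8}\,{}_1F_{1}\!\left(\tfrac{1-s}{2};\tfrac12;\tfrac{z^2}{4}\right)$, everything reduces to the evaluation, for each $n\ge1$, of
\begin{equation*}
I_n:=\frac{1}{2\pi i}\int_{(c)}\Gamma\!\left(\tfrac s2\right)\left(\sqrt{\pi}\,n\alpha\right)^{-s}{}_1F_{1}\!\left(\tfrac{1-s}{2};\tfrac12;\tfrac{z^2}{4}\right)ds.
\end{equation*}

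To handle $I_n$, I would begin from the elementary Mellin transform
\begin{equation*}
\int_{0}^{\infty}e^{-x^{2}}\cos(zx)\,x^{s-1}\,dx=\tfrac12\,\Gamma\!\left(\tfrac s2\right){}_1F_{1}\!\left(\tfrac s2;\tfrac12;-\tfrac{z^2}{4}\right)\qquad(\operatorname{Re}s>0),
\end{equation*}
proved by expanding $\cos(zx)$ in its Taylor series and integrating term by term, via $\int_0^\infty e^{-x^2}x^{s+2m-1}\,dx=\tfrac12\Gamma\!\left(\tfrac s2+m\right)$ and $(2m)!=2^{2m}m!\left(\tfrac12\right)_m$. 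Kummer's transformation (\ref{kft}) rewrites the right-hand side as $\tfrac12 e^{-z^2/4}\Gamma(s/2)\,{}_1F_{1}\!\left(\tfrac{1-s}{2};\tfrac12;\tfrac{z^2}{4}\right)$; inverting the Mellin transform and then setting $x=\sqrt\pi\,n\alpha$ gives $e^{-\pi n^2\alpha^2}\cos(\sqrt\pi\,n\alpha z)=\tfrac12 e^{-z^2/4}I_n$, that is $I_n=2e^{z^2/4}e^{-\pi n^2\alpha^2}\cos(\sqrt\pi\,n\alpha z)$. Multiplying by the prefactor $-\tfrac12\sqrt\alpha\,e^{-z^2/8}$ and summing over $n$ produces $-\sqrt\alpha\,e^{z^2/8}\sum_{n\ge1}e^{-\pi n^2\alpha^2}\cos(\sqrt\pi\,n\alpha z)$, which, combined with the residue term, gives precisely the first member of (\ref{eqsym}). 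For the middle member, (\ref{sp1}) gives $\nabla\!\left(\alpha,z,\tfrac{1+it}{2}\right)=\nabla\!\left(\beta,iz,\tfrac{1+it}{2}\right)$, so the far-right integral of (\ref{eqsym}) is invariant under $(\alpha,z)\mapsto(\beta,iz)$; applying the first equality with $\alpha$ and $z$ replaced by $\beta$ and $iz$, and using $(iz)^2=-z^2$ together with $\cos(\sqrt\pi\,n\beta\cdot iz)=\cosh(\sqrt\pi\,n\beta z)$, yields the middle expression.

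The main obstacle is rigour in the two analytic steps above — the contour shift across $s=1$ and the termwise integration of $\sum_n n^{-s}$ — both of which require a bound on $\Gamma(s/2)\,{}_1F_{1}\!\left(\tfrac{1-s}{2};\tfrac12;\tfrac{z^2}{4}\right)$ as $|\operatorname{Im}s|\to\infty$ along vertical lines. I expect this to follow from the exponential decay $\left|\Gamma(s/2)\right|\ll e^{-\pi|\operatorname{Im}s|/4}$ together with the standard large-parameter asymptotics of the confluent hypergeometric function, which show that the hypergeometric factor grows only subexponentially in $|\operatorname{Im}s|$, the estimates being locally uniform in $z$. That same bound shows that the far-right integral of (\ref{eqsym}) defines an entire function of $z$, so it suffices to carry out the evaluation for real $z$ and then invoke analytic continuation.
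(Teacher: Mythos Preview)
Your proposal is correct and follows essentially the same route as the paper: the folding of the half-line $\Xi$-integral into a vertical line integral via $s=\tfrac{1+it}{2}$ and the symmetry $\xi(s)=\xi(1-s)$ is precisely the content of the paper's Theorem~3.1 (specialised to $\phi(t)=(t+\tfrac12)^{-1}$), and the subsequent contour shift to $\operatorname{Re}s>1$, residue at $s=1$, Dirichlet-series expansion of $\zeta(s)$, and termwise inverse Mellin transform (your $I_n$) match the paper's (\ref{sp4.5})--(\ref{invmel1}) exactly. The growth bounds you flag as the main rigour point are supplied in the paper by Stirling's formula (\ref{strivert}) together with the Whittaker asymptotic (\ref{conasy}) for ${}_1F_1$, confirming your expectation that the hypergeometric factor is subexponential.
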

At the end of his short note \cite{ghh}, Hardy obtained an identity, whose corrected form (see for example, \cite{series}), is
\textit{\begin{equation}\label{hardyf}
\int_{0}^{\infty}\frac{\Xi(t/2)}{1+t^2}\frac{\cos nt}{\cosh \tf{1}{2}\pi t}\, dt = \frac{1}{4}e^{-n}\left(2n+\frac{1}{2}\gamma+\frac{1}{2}\log\pi+\log 2\right)+\tf{1}{2}e^{n}\int_{0}^{\infty}\psi(x+1)e^{-\pi x^2e^{4n}}\, dx,
\end{equation}
where $n$ is real and $\psi(x)$ is defined in \textup{(\ref{w1.15b})}.}

Later, Koshliakov \cite[Equations (14), (20)]{koshliakov5} expressed the above identity in a compact and symmetric form, which we rephrase in the following form, valid for $\alpha\beta=1$:
\begin{align}\label{hardy1f}
\sqrt{\alpha}\int_{0}^{\infty}\left(\psi(x+1)-\log x\right)e^{-\pi \alpha^2x^2}\, dx&=\sqrt{\beta}\int_{0}^{\infty}\left(\psi(x+1)-\log x\right)e^{-\pi \beta^2x^2}\, dx\nonumber\\
&=2\int_{0}^{\infty}\frac{\Xi(t/2)}{1+t^2}\frac{\cos \left(\tf{1}{2}t\log\alpha\right)}{\cosh \tf{1}{2}\pi t}\, dt.
\end{align}
This is seen at once by letting $n=\tfrac{1}{2}\log\alpha$ in (\ref{hardyf}) and by using the formula \cite[p.~572, formula 4.333]{grn} $\int_{0}^{\infty}e^{-\pi\alpha^2x^2}\log x\, dx=-\tfrac{1}{4\alpha}\left(\gamma+\log\left(4\alpha^2\pi\right)\right)$. Koshliakov also proved this identity in several of his other papers, namely,  \cite[Equation 30.5]{koshliakov6p2}, \cite[Equation 34.10]{koshliakov6p3}\footnote{The formula here contains a typo, as the factor $\frac{1}{2}\log 2\pi$ should be $\frac{1}{2}\log\pi$.}, \cite[Equations 18, 19]{koshliakov7}. He also gave two different generalizations of Hardy's formula; one in \cite[Equation 30.4]{koshliakov6p2} and \cite[Equation 34.1]{koshliakov6p3}, and another in \cite[Equation 27]{koshliakov3}. (See \cite[p.~198--199]{bfikm} for the genesis of the monograph \cite{koshliakov3} written under Koshliakov's patronymic name `N. S. Sergeev'.)

Here, we obtain the following new generalization of (\ref{hardy1f}), again of the form $F(z,\alpha)=F(iz,\beta)$.
\begin{theorem}
Let $z\in\mathbb{C}$ and let $\psi(x)$ be defined as in \textup{(\ref{w1.15b})}. If $\alpha$ and $\beta$ are two positive numbers such that $\alpha\beta=1$, then
\begin{align}\label{hfg}
&\sqrt{\alpha}e^{\frac{z^2}{8}}\int_{0}^{\infty}\left(\psi(x+1)-\log x\right)e^{-\pi \alpha^2x^2}\cos\left(\sqrt{\pi}\alpha xz\right)\, dx\nonumber\\
&=\sqrt{\beta}e^{-\frac{z^2}{8}}\int_{0}^{\infty}\left(\psi(x+1)-\log x\right)e^{-\pi \beta^2x^2}\cosh\left(\sqrt{\pi}\beta xz\right)\, dx\nonumber\\
&=\int_{0}^{\infty}\frac{\Xi(t/2)}{1+t^2}\frac{\nabla\left(\alpha, z, \frac{1+it}{2}\right)}{\cosh \tf{1}{2}\pi t}\, dt.
\end{align}
\end{theorem}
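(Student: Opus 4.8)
The plan is to prove the first equality in \eqref{hfg} by showing that each of its two integrals equals one and the same Mellin--Barnes integral along the line $\Re s=\tfrac12$, namely
\begin{equation*}
J(z,\alpha):=-\frac{\pi}{2}\,e^{-z^2/8}\,\frac{1}{2\pi i}\int_{(1/2)}\pi^{-s/2}\,\Gamma\!\left(\frac{s}{2}\right)\frac{\zeta(s)}{\sin\pi s}\,\alpha^{\frac12-s}\,{}_1F_{1}\!\left(\frac{1-s}{2};\frac{1}{2};\frac{z^2}{4}\right)ds.
\end{equation*}
Granting this, the second equality in \eqref{hfg} follows by applying the first with $(\alpha,z)$ replaced by $(\beta,iz)$: its left side then becomes the middle member of \eqref{hfg}, since $e^{(iz)^2/8}=e^{-z^2/8}$ and $\cos(i\sqrt{\pi}\beta xz)=\cosh(\sqrt{\pi}\beta xz)$, while its right side is $\int_{0}^{\infty}\frac{\Xi(t/2)}{1+t^2}\frac{\nabla(\beta,iz,\tfrac{1+it}{2})}{\cosh\tfrac12\pi t}\,dt$, which equals the right side of \eqref{hfg} by the identity $\nabla(\beta,iz,\tfrac{1+it}{2})=\nabla(\alpha,z,\tfrac{1+it}{2})$ proved in \eqref{sp1}. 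Note that the equality ``left side $=$ right side'' of \eqref{hfg} involves only $\alpha$ and $z$, so it holds for every $\alpha>0$ and $z\in\mathbb{C}$; the hypothesis $\alpha\beta=1$ enters only at the last step through \eqref{sp1}.

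To evaluate the $\Xi$-integral in \eqref{hfg}, observe that its integrand is even in $t$, so it equals $\tfrac12\int_{-\infty}^{\infty}$; splitting $\nabla=\rho(\alpha,z,\tfrac{1+it}{2})+\rho(\alpha,z,\tfrac{1-it}{2})$ and sending $t\mapsto-t$ in the second summand collapses the two pieces into one, leaving $\int_{-\infty}^{\infty}\frac{\Xi(t/2)}{(1+t^2)\cosh\frac12\pi t}\,\rho(\alpha,z,\tfrac{1+it}{2})\,dt$. Now substitute $\Xi(t/2)=\xi(\tfrac{1+it}{2})$ together with \eqref{xi}, use the identities $\cosh\tfrac12\pi t=\sin\pi s$ and $1+t^2=4s(1-s)$ valid for $s=\tfrac{1+it}{2}$, and change the variable of integration from $t$ to $s$; a short simplification identifies this integral with $J(z,\alpha)$.

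To evaluate the $\alpha$-integral in \eqref{hfg}, the key ingredient is the Mellin transform
\begin{equation*}
\int_{0}^{\infty}\bigl(\psi(x+1)-\log x\bigr)x^{s-1}\,dx=-\frac{\pi\,\zeta(1-s)}{\sin\pi s}\qquad(0<\Re s<1),
\end{equation*}
which I would derive from Binet's formula $\psi(x+1)-\log x=\tfrac{1}{2x}-2\int_{0}^{\infty}\frac{u\,du}{(u^2+x^2)(e^{2\pi u}-1)}$ as follows: writing $\frac{1}{e^{2\pi u}-1}=\frac{1}{2\pi u}+h(u)$ cancels the term $\tfrac{1}{2x}$ and gives $\psi(x+1)-\log x=-2\int_{0}^{\infty}\frac{u\,h(u)}{u^2+x^2}\,du$; taking Mellin transforms, inverting the order of integration, and using $\int_{0}^{\infty}\frac{x^{s-1}}{u^2+x^2}\,dx=\frac{\pi u^{s-2}}{2\sin(\pi s/2)}$, $\int_{0}^{\infty}u^{s-1}h(u)\,du=(2\pi)^{-s}\Gamma(s)\zeta(s)$, and the functional equation of $\zeta(s)$ yields the stated formula. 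With this, insert into the $\alpha$-integral the Mellin inversion of $\psi(x+1)-\log x$ along $\Re s=\tfrac12$, interchange the two integrations (justified by absolute convergence), and evaluate the inner $x$-integral by $\int_{0}^{\infty}x^{\mu-1}e^{-px^2}\cos(bx)\,dx=\tfrac12 p^{-\mu/2}\Gamma(\tfrac{\mu}{2})\,{}_1F_{1}(\tfrac{\mu}{2};\tfrac12;-\tfrac{b^2}{4p})$ with $\mu=1-s$, $p=\pi\alpha^2$, $b=\sqrt{\pi}\alpha z$ (so $\tfrac{b^2}{4p}=\tfrac{z^2}{4}$). After the change of variable $s\mapsto1-s$ (which fixes the line $\Re s=\tfrac12$) and one application of Kummer's transformation \eqref{kft} to replace $e^{z^2/8}{}_1F_{1}(\tfrac{s}{2};\tfrac12;-\tfrac{z^2}{4})$ by $e^{-z^2/8}{}_1F_{1}(\tfrac{1-s}{2};\tfrac12;\tfrac{z^2}{4})$, the resulting integral is exactly $J(z,\alpha)$, which completes the proof of the first equality in \eqref{hfg}.

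The step I expect to be the main obstacle is the Mellin transform of $\psi(x+1)-\log x$: neither the pairing of $\tfrac1{2x}$ with $x^{s-1}$ nor that of $(2\pi u)^{-1}$ with $u^{s-1}$ converges on its own, so the cancellations have to be organized carefully (the rewriting via $h(u)$ above does this; alternatively one may quote the corresponding transform from the literature on Koshliakov's formula \eqref{hardy1f}). A secondary technical point is the absolute convergence of the Mellin--Barnes integrals and the validity of the interchanges: on $\Re s=\tfrac12$ the factor $1/\sin\pi s$ together with Stirling's bound for $\Gamma(s/2)$ gives exponential decay in $|\Im s|$, which dominates the polynomial growth of $\zeta(s)$ and the sub-exponential growth, in its first parameter, of ${}_1F_{1}(\tfrac{1-s}{2};\tfrac12;\tfrac{z^2}{4})$, so that every integral converges absolutely and Fubini's theorem applies throughout.
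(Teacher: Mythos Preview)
Your argument is correct, but it follows a genuinely different route from the paper's proof. Both arguments begin the same way, converting the $\Xi$-integral to the Mellin--Barnes integral $J(z,\alpha)$ on $\Re s=\tfrac12$ (this is the paper's \eqref{nsphar}). From there the paper proceeds by shifting the contour to $\Re s=1+\delta$, computing a double-pole residue at $s=1$ that produces a ${}_2F_{2}$ term (equations \eqref{resimpact2}--\eqref{vll}), expanding $\zeta(s)$ as a Dirichlet series, shifting each term back into the critical strip, and evaluating the result via a Mellin convolution with $1/\sin\pi s$; the series definition \eqref{w1.15b} of $\psi$ then emerges, and a separate integral identity \eqref{intimp} is needed to absorb the ${}_2F_{2}$ residue into the $-\log x$ contribution. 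Your approach bypasses all of this: once you have the closed-form Mellin pair $\int_{0}^{\infty}(\psi(x+1)-\log x)x^{s-1}\,dx=-\pi\zeta(1-s)/\sin\pi s$ (your derivation via Binet's formula and the regularised transform $\int_{0}^{\infty}u^{s-1}h(u)\,du=(2\pi)^{-s}\Gamma(s)\zeta(s)$ on $0<\Re s<1$ is correct), a single application of Parseval on the line $\Re s=\tfrac12$, followed by $s\mapsto1-s$ and Kummer, identifies the $\alpha$-integral directly with $J(z,\alpha)$. Your route is shorter and avoids residue calculus entirely; the paper's route, while longer, does not require knowing the Mellin transform of $\psi(x+1)-\log x$ in advance and produces the intermediate identity \eqref{beffis1} with explicit ${}_2F_{2}$ terms. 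One small remark on presentation: what you call ``the first equality'' is actually the equality between the first and third lines of \eqref{hfg}; in the displayed chain the first equality is between the $\alpha$- and $\beta$-integrals. Your logic is unaffected, since proving (first line)$=$(third line) and then substituting $(\alpha,z)\mapsto(\beta,iz)$ yields the full chain, but the labeling should be adjusted.
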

In \cite{ferrar}, Ferrar obtained some transformation formulas of the form $F(\alpha)=F(\beta)$, of which one is rephrased in the form given below.\\

\textit{
Let $K_{0}(z)$ denote the modified Bessel function of order $0$. If $\alpha$ and $\beta$ are positive numbers such that $\alpha\beta=1$, then
{\allowdisplaybreaks\begin{align}\label{ferex}
&\sqrt{\alpha}\left(\frac{-\gamma+\log 16\pi+2\log\alpha}{\alpha}-2\sum_{n=1}^{\infty}\left(e^{\frac{\pi \alpha^2n^2}{2}}K_{0}\left(\frac{\pi \alpha^2n^2}{2}\right)-\frac{1}{n\alpha}\right)\right)\nonumber\\
&=\sqrt{\beta}\left(\frac{-\gamma+\log 16\pi+2\log\beta}{\beta}-2\sum_{n=1}^{\infty}\left(e^{\frac{\pi\beta^2n^2}{2}}K_{0}\left(\frac{\pi \beta^2n^2}{2}\right)-\frac{1}{n\beta}\right)\right)\nonumber\\
&=4\pi^{-\frac{3}{2}}\int_{0}^{\infty}\Gamma\left(\frac{1+it}{4}\right)\Gamma\left(\frac{1-it}{4}\right)\Xi\left(\frac{t}{2}\right)\frac{\cos\left(\frac{1}{2}t\log\alpha\right)}{1+t^2}\, dt.
\end{align}}}%
This also admits the following new generalization, which is the third example of the form $F(z,\alpha)=F(iz,\beta)$.
\begin{theorem}
Let $z\in\mathbb{C}$ and let $K_{0}(z)$ be defined above. If $\alpha$ and $\beta$ are positive numbers such that $\alpha\beta=1$, then
\begin{align}\label{gf}
&\sqrt{\alpha}e^{\frac{z^2}{8}}\int_{0}^{\infty}e^{-\frac{\alpha^2t^2}{4\pi}}\cos\left(\frac{\alpha tz}{2\sqrt{\pi}}\right)\left(\sum_{n=1}^{\infty}K_{0}(nt)-\frac{\pi}{2t}\right)\, dt\nonumber\\
&=\sqrt{\beta}e^{\frac{-z^2}{8}}\int_{0}^{\infty}e^{-\frac{\beta^2t^2}{4\pi}}\cosh\left(\frac{\beta tz}{2\sqrt{\pi}}\right)\left(\sum_{n=1}^{\infty}K_{0}(nt)-\frac{\pi}{2t}\right)\, dt\nonumber\\
&=-\frac{1}{2\sqrt{\pi}}\int_{0}^{\infty}\Gamma\left(\frac{1+it}{4}\right)\Gamma\left(\frac{1-it}{4}\right)\frac{\Xi(t/2)}{1+t^2}\nabla\left(\alpha,z,\frac{1+it}{2}\right)\, dt.\nonumber\\
\end{align}
\end{theorem}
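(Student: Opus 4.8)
The plan is to recognise the last integral in (\ref{gf}) as the quantity $F(z,\alpha)$ defined in (\ref{sp0}), to obtain the first two equalities of (\ref{gf}) for free from the symmetry (\ref{sp1}), and to prove the remaining equality by reducing both of its sides to one and the same Mellin--Barnes integral. Indeed, the integral on the right of (\ref{gf}) is $F(z,\alpha)$ with $f(t/2)=-\tfrac{1}{2\sqrt{\pi}}\,\Gamma\!\left(\tfrac{1+it}{4}\right)\Gamma\!\left(\tfrac{1-it}{4}\right)(1+t^{2})^{-1}$, which has the required shape $f(t)=\phi(it)\phi(-it)$ with $\phi(w)=c\,\Gamma(\tfrac14-\tfrac w2)(1-2w)^{-1}$ (and $t\mapsto\phi(it)$ analytic). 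Since the computation (\ref{sp1}) uses only the symmetry of $\nabla$ and not the shape of $f$, we get $F(z,\alpha)=F(iz,\beta)$. Consequently, once the identity ``the first line of (\ref{gf}) equals $F(z,\alpha)$'' is proved for all $\alpha>0$, $\beta=1/\alpha$, $z\in\mathbb{C}$, replacing $(z,\alpha)$ by $(iz,\beta)$ and using $\cos(ix)=\cosh x$ and $e^{(iz)^{2}/8}=e^{-z^{2}/8}$ turns the first line into the second line; so it suffices to prove that the first line of (\ref{gf}) equals $F(z,\alpha)$.

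To deal with $F(z,\alpha)$, I would set $s=\tfrac{1+it}{2}$, use that the integrand in (\ref{sp0}) is even in $t$ to extend the range to $(-\infty,\infty)$, and use (\ref{xif}), (\ref{xi}), (\ref{nabla})--(\ref{rho}) to rewrite $\Gamma(\tfrac{1+it}{4})\Gamma(\tfrac{1-it}{4})=\Gamma(\tfrac s2)\Gamma(\tfrac{1-s}{2})$, $\Xi(t/2)=\xi(s)$, $1+t^{2}=4s(1-s)$, $\nabla(\alpha,z,\tfrac{1+it}{2})=\nabla(\alpha,z,s)$, $dt=-2i\,ds$. Because $\Gamma(\tfrac s2)\Gamma(\tfrac{1-s}{2})\,\xi(s)/(4s(1-s))$ is invariant under $s\mapsto 1-s$ (as $\xi(s)=\xi(1-s)$), writing $\nabla=\rho(\alpha,z,s)+\rho(\alpha,z,1-s)$ and changing $s\mapsto 1-s$ in the second summand collapses $\nabla$ to $2\rho(\alpha,z,s)$; then inserting $\xi(s)=-\tfrac12 s(1-s)\pi^{-s/2}\Gamma(\tfrac s2)\zeta(s)$ and Kummer's transformation (\ref{kft}) inside $\rho$ (so that $\rho(\alpha,z,s)=\sqrt{\alpha}\,\alpha^{-s}e^{z^{2}/8}\,{}_{1}F_{1}(\tfrac s2;\tfrac12;-\tfrac{z^{2}}{4})$) gives
\begin{equation*}
F(z,\alpha)=\frac{\sqrt{\pi}}{4}\,\sqrt{\alpha}\,e^{z^{2}/8}\cdot\frac{1}{2\pi i}\int_{(1/2)}\Gamma\!\left(\tfrac s2\right)^{2}\Gamma\!\left(\tfrac{1-s}{2}\right)(\pi\alpha^{2})^{-s/2}\zeta(s)\,{}_{1}F_{1}\!\left(\tfrac s2;\tfrac12;-\tfrac{z^{2}}{4}\right)ds ,
\end{equation*}
the integral being taken over $\Re s=\tfrac12$ (all poles of the integrand lie at $s\le 0$ or $s\ge 1$).

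For the first line of (\ref{gf}) I would use that the Mellin transform of $g(t):=\sum_{n=1}^{\infty}K_{0}(nt)-\tfrac{\pi}{2t}$ equals $2^{s-2}\Gamma(\tfrac s2)^{2}\zeta(s)$ on the strip $0<\Re s<1$: this follows from $\int_{0}^{\infty}K_{0}(nt)t^{s-1}\,dt=n^{-s}2^{s-2}\Gamma(\tfrac s2)^{2}$ ($\Re s>0$), summed over $n$ for $\Re s>1$, combined with the asymptotic $\sum_{n}K_{0}(nt)=\tfrac{\pi}{2t}+O(\log(1/t))$ as $t\to0^{+}$ and a splitting of $\int_{0}^{\infty}$ at $t=1$, which shows that subtracting $\tfrac{\pi}{2t}$ exactly cancels the pole at $s=1$. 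By Mellin inversion, $g(t)=\tfrac{1}{2\pi i}\int_{(c)}2^{s-2}\Gamma(\tfrac s2)^{2}\zeta(s)t^{-s}\,ds$ for $0<c<\tfrac12$. Substituting this into the first line of (\ref{gf}), interchanging the two integrations (justified by absolute convergence, using the exponential decay of $\Gamma(\tfrac s2)^{2}$ on vertical lines), and evaluating the inner integral by the classical formula $\int_{0}^{\infty}e^{-at^{2}}\cos(bt)\,t^{\mu-1}\,dt=\tfrac{\Gamma(\mu/2)}{2a^{\mu/2}}\,{}_{1}F_{1}(\tfrac\mu2;\tfrac12;-\tfrac{b^{2}}{4a})$ with $\mu=1-s$, $a=\alpha^{2}/(4\pi)$, $b=\alpha z/(2\sqrt{\pi})$ (so $b^{2}/(4a)=z^{2}/4$), then collecting the powers of $2$ and $\pi$, performing the substitution $s\mapsto 1-s$ together with $\xi(s)=\xi(1-s)$ once more, and finally shifting the resulting line $\Re s=1-c$ to $\Re s=\tfrac12$ (no poles are crossed, the integrand being holomorphic for $0<\Re s<1$), produces exactly the right-hand side of the displayed formula for $F(z,\alpha)$. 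This gives the first line of (\ref{gf}) $=F(z,\alpha)$, and hence the theorem.

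The step I expect to be the main obstacle is the analytic bookkeeping around $g$: justifying its Mellin transform on $0<\Re s<1$ even though $\sum_{n}K_{0}(nt)$ has a Mellin transform only for $\Re s>1$, establishing the $t\to0^{+}$ asymptotics and the uniform-convergence statements for the $K_{0}$-series, and checking that each interchange of summation with integration and each contour shift is legitimate via standard estimates (exponential decay of the Gamma factors, polynomial growth of $\zeta$, and the sub-exponential growth of ${}_{1}F_{1}(\tfrac s2;\tfrac12;-\tfrac{z^{2}}{4})$ on vertical strips). Granting these, the matching of the two Mellin--Barnes integrals is routine manipulation of Gamma factors using Kummer's transformation and $\xi(s)=\xi(1-s)$.
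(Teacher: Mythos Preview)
Your argument is correct, and it takes a genuinely different route from the paper's. The paper starts from the same line-integral representation as you do, but then shifts the contour to $\Re s>1$, picks up a double-pole residue at $s=1$ (producing a ${}_2F_2$ term), expands $\zeta(s)$ as a Dirichlet series, shifts each summand back into the critical strip (collecting another residue), evaluates each term via a Mellin convolution with the Beta kernel to obtain integrals of $e^{-x^2}\cos(xz)/\sqrt{x^2+\pi\alpha^2n^2}$, and only then invokes Watson's identity $2\sum_{n\ge1}K_0(nt)=\pi/t+\gamma+\log(t/4\pi)+2\pi\sum_n\bigl((t^2+4\pi^2n^2)^{-1/2}-(2\pi n)^{-1}\bigr)$ to introduce the $K_0$-sum; the ${}_2F_2$ residue and the auxiliary Gaussian integrals then cancel in a somewhat miraculous way. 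You bypass all of this by observing directly that $g(t)=\sum_{n\ge1}K_0(nt)-\pi/(2t)$ has Mellin transform $2^{s-2}\Gamma(s/2)^2\zeta(s)$ on the whole strip $0<\Re s<1$, so that both sides of the desired equality become the \emph{same} Mellin--Barnes integral on $\Re s=\tfrac12$ after one use of Kummer's transformation and one use of the functional equation $\xi(s)=\xi(1-s)$.

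What your approach buys is economy: no residue computations, no ${}_2F_2$, no Watson formula, no cancellation to verify. What the paper's approach buys is that it never needs the Mellin transform of the regularized series $g$; it uses only the Mellin transform of a single $K_0$ and the table formula for $\int e^{-at^2}\cos(bt)\,dt$. The step you flag as the main obstacle is indeed the only nontrivial one, but your justification is essentially complete: the contour shift of $\tfrac{1}{2\pi i}\int_{(c)}2^{s-2}\Gamma(s/2)^2\zeta(s)t^{-s}\,ds$ from $c>1$ to $0<c<1$ picks up exactly the residue $\tfrac{\pi}{2t}$ at $s=1$, which shows $g(t)=\tfrac{1}{2\pi i}\int_{(c)}2^{s-2}\Gamma(s/2)^2\zeta(s)t^{-s}\,ds$ on $0<c<1$ and hence the stated Mellin transform; the integrability checks (behaviour $g(t)=O(\log(1/t))$ near $0$ and $g(t)\sim-\pi/(2t)$ at infinity) follow from Watson's expansion or directly from this contour representation. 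The interchange of integrals and the final contour shift to $\Re s=\tfrac12$ are justified exactly as you say, using Stirling's estimate for the Gamma factors, the polynomial bound for $\zeta$ in the critical strip, and the asymptotic ${}_1F_1(\tfrac14-\lambda;\tfrac12;z^2/4)\sim e^{z^2/8}\cos(\sqrt{\lambda}\,z)$, which grows at most like $e^{C\sqrt{|\Im s|}}$ and is therefore dominated.
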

For real $n$, Ramanujan \cite{riemann} showed that
\begin{equation}\label{rampe}
e^{-n}-4\pi e^{-3n}\int_{0}^{\infty}\frac{xe^{-\pi x^2e^{-4n}}}{e^{2\pi x}-1}\, dx=\frac{1}{4\pi\sqrt{\pi}}\int_{0}^{\infty}\Gamma\left(\frac{-1+it}{4}\right)\Gamma\left(\frac{-1-it}{4}\right)\Xi\left(\frac{t}{2}\right)\cos nt\, dt.
\end{equation}
As noted in \cite{series}, letting $n=\tfrac{1}{2}\log\alpha$ in the above identity and noting that the resulting integral on the left side is invariant under $\alpha\to\beta$, where $\alpha\beta=1$, gives
\begin{align}\label{mrram}
&\alpha^{-\frac{1}{2}}-4\pi\alpha^{-\frac{3}{2}}\int_{0}^{\infty}\frac{xe^{-\frac{\pi x^2}{\alpha^2}}}{e^{2\pi x}-1}\, dx=\beta^{-\frac{1}{2}}-4\pi\beta^{-\frac{3}{2}}\int_{0}^{\infty}\frac{xe^{-\frac{\pi x^2}{\beta^2}}}{e^{2\pi x}-1}\, dx\nonumber\\
&=\frac{1}{4\pi\sqrt{\pi}}\int_{0}^{\infty}\Gamma\left(\frac{-1+it}{4}\right)\Gamma\left(\frac{-1-it}{4}\right)\Xi\left(\frac{t}{2}\right)\cos \left(\frac{1}{2}t\log\alpha\right)\, dt.
\end{align}
For more details on the first equality in (\ref{mrram}), see \cite{series}. Interestingly, this identity does not admit a generalization of the form $F(z,\alpha)=F(iz,\beta)$, as can be seen from the following new generalization of (\ref{rampe}).
\begin{theorem}\label{thmrgenr}
Let $z\in\mathbb{C}$ and let $\rho(x,z,s)$ be defined in \textup{(\ref{rho})}. Then,
\begin{align}\label{rgenr}
&\alpha^{-\frac{1}{2}}e^{-\frac{z^2}{8}}-4\pi\alpha^{\frac{1}{2}}e^{\frac{z^2}{8}}\int_{0}^{\infty}\frac{xe^{-\pi\alpha^2x^2}\cos\left(\sqrt{\pi}\alpha xz\right)}{e^{2\pi x}-1}\, dx\nonumber\\
&=\frac{1}{8\pi^{3/2}}\int_{-\infty}^{\infty}\Gamma\left(\frac{-1+it}{4}\right)\Gamma\left(\frac{-1-it}{4}\right)\Xi\left(\frac{t}{2}\right)\rho\left(\alpha,z,\frac{3+it}{2}\right)\, dt.
\end{align}
\end{theorem}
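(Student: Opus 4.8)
\noindent\textbf{Proof strategy for Theorem \ref{thmrgenr}.}
The plan is to start from the right-hand side of (\ref{rgenr}) and strip it down to the left-hand side by a change of variable, a shift of the line of integration, and the partial-fraction expansion of the cotangent. First I would write $\Xi(t/2)=\xi\left(\frac{1+it}{2}\right)$ and set $s=\frac{1+it}{2}$, so that real $t$ corresponds to $\Re s=\tfrac12$, $dt=-2i\,ds$, and the three "free" arguments become $\frac{-1+it}{4}=\frac{s-1}{2}$, $\frac{-1-it}{4}=-\frac{s}{2}$, $\frac{3+it}{2}=s+1$. Tracking the constant, the right side of (\ref{rgenr}) becomes $\frac{1}{2\sqrt{\pi}}\cdot\frac{1}{2\pi i}\int_{(1/2)}\Gamma\left(\frac{s-1}{2}\right)\Gamma\left(-\frac{s}{2}\right)\xi(s)\rho(\alpha,z,s+1)\,ds$, where $\int_{(c)}$ denotes integration over the vertical line $\Re s=c$ (convergence on $\Re s=\tfrac12$, and later on $\Re s=c$, is guaranteed by Stirling's formula since the $\Gamma$- and $\xi$-factors decay exponentially in $|\Im s|$ while $\rho$ grows only polynomially). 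I would then insert $\xi(s)=\frac12 s(s-1)\pi^{-s/2}\Gamma(s/2)\zeta(s)$ and collapse the gamma factors by means of $\Gamma\left(\frac{s-1}{2}\right)\cdot\frac12 s(s-1)=s\,\Gamma\left(\frac{s+1}{2}\right)$ and the reflection formula $\Gamma\left(-\frac{s}{2}\right)\Gamma\left(\frac{s}{2}\right)=-\frac{2\pi}{s\sin(\pi s/2)}$; the factor $s$ cancels and the integrand becomes $-\frac{2\pi\,\Gamma\left(\frac{s+1}{2}\right)}{\sin(\pi s/2)}\,\pi^{-s/2}\zeta(s)\,\rho(\alpha,z,s+1)$.

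The next ingredient is a Mellin representation of $\rho$. Starting from the classical evaluation $\int_0^{\infty}x^{\mu-1}e^{-ax^2}\cos(bx)\,dx=\tfrac12 a^{-\mu/2}\Gamma(\mu/2)\,{}_1F_{1}\left(\frac{\mu}{2};\frac12;-\frac{b^2}{4a}\right)$ with $\mu=s+1$, $a=\pi\alpha^2$, $b=\sqrt{\pi}\alpha z$ (so $\frac{b^2}{4a}=\frac{z^2}{4}$), and applying Kummer's transformation (\ref{kft}), one obtains
\begin{equation*}
\rho(\alpha,z,s+1)=\frac{2\alpha^{1/2}\pi^{(s+1)/2}e^{z^2/8}}{\Gamma\left(\frac{s+1}{2}\right)}\int_0^{\infty}x^{s}e^{-\pi\alpha^2 x^2}\cos\left(\sqrt{\pi}\alpha xz\right)\,dx
\end{equation*}
(as a sanity check, at $z=0$ this reproduces $\rho(\alpha,0,s+1)=\alpha^{-1/2-s}$). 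Substituting this, the factors $\Gamma\left(\frac{s+1}{2}\right)$ cancel and almost everything disappears, leaving the right side of (\ref{rgenr}) equal to $-2\pi\alpha^{1/2}e^{z^2/8}\cdot\frac{1}{2\pi i}\int_{(1/2)}\frac{\zeta(s)}{\sin(\pi s/2)}\left(\int_0^{\infty}x^{s}e^{-\pi\alpha^2 x^2}\cos(\sqrt{\pi}\alpha xz)\,dx\right)ds$.

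Now I would move the line of integration from $\Re s=\tfrac12$ to $\Re s=c$ with $1<c<2$. In this strip the only singularity of the integrand is the simple pole of $\zeta$ at $s=1$ ($\sin(\pi s/2)$ is nonzero there, and the $\Gamma$-poles were absorbed in the simplification), so the shift produces a residue contribution equal to $2\pi\alpha^{1/2}e^{z^2/8}\,G(1)$ with $G(1):=\int_0^{\infty}x\,e^{-\pi\alpha^2 x^2}\cos(\sqrt{\pi}\alpha xz)\,dx$; the horizontal pieces vanish by Stirling's formula. On $\Re s=c$ I would expand $\zeta(s)=\sum_{n\ge1}n^{-s}$, interchange the summation with the $s$- and $x$-integrations (legitimate by absolute convergence, using the exponential decay of $1/\sin(\pi s/2)$ on vertical lines and the Gaussian decay in $x$), and evaluate the resulting Mellin--Barnes integral by summing residues of $\frac{y^{s}}{\sin(\pi s/2)}$ at the even integers: for every $y>0$, $\frac{1}{2\pi i}\int_{(c)}\frac{y^{s}}{\sin(\pi s/2)}\,ds=\frac{2}{\pi}\cdot\frac{y^2}{1+y^2}$ (close to the right if $y<1$, to the left if $y>1$; both give the same value). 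With $y=x/n$ this collapses the triple integral to $\frac{2}{\pi}\int_0^{\infty}e^{-\pi\alpha^2 x^2}\cos(\sqrt{\pi}\alpha xz)\sum_{n\ge1}\frac{x^2}{n^2+x^2}\,dx$.

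Finally I would apply the partial-fraction expansion of the cotangent in the form $\sum_{n\ge1}\frac{x^2}{n^2+x^2}=\frac{\pi x\coth(\pi x)-1}{2}=\frac{\pi x}{2}-\frac12+\frac{\pi x}{e^{2\pi x}-1}$. The $\frac{\pi x}{2}$-term reproduces $G(1)$ and cancels the residue contribution from $s=1$; the $\frac{\pi x}{e^{2\pi x}-1}$-term produces precisely the integral on the left of (\ref{rgenr}); and the constant $-\tfrac12$-term, after the evaluation $\int_0^{\infty}e^{-\pi\alpha^2 x^2}\cos(\sqrt{\pi}\alpha xz)\,dx=\frac{1}{2\alpha}e^{-z^2/4}$, yields $\alpha^{-1/2}e^{-z^2/8}$. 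Assembling the three contributions gives exactly the left side of (\ref{rgenr}). The identity would first be proved for $z$ in a horizontal strip about $\mathbb{R}$ where all the interchanges converge absolutely, and then extended to all $z\in\mathbb{C}$ by analyticity of both sides. I expect the main obstacle to be not any single estimate but the bookkeeping in the last step: keeping every constant exact so that the residue at $s=1$ cancels precisely against the $\tfrac{\pi x}{2}$-contribution, together with the routine (but necessary) verification that the shift from $\Re s=\tfrac12$ to $\Re s=c$ is legitimate.
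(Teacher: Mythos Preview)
Your argument is correct and follows essentially the same route as the paper's: the change of variable $s=\tfrac{1+it}{2}$, the reflection-formula simplification to $\dfrac{\Gamma\left(\frac{s+1}{2}\right)\zeta(s)}{\sin(\pi s/2)}$, the shift across $s=1$, the expansion of $\zeta$, the Mellin--Barnes evaluation of $1/\sin(\pi s/2)$ giving $\tfrac{2}{\pi}\cdot\tfrac{y^2}{1+y^2}$, and the $\coth$ partial fractions are all exactly as in Section~7.

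The one noteworthy difference is organizational. You replace $\rho(\alpha,z,s+1)$ by its integral representation \emph{before} shifting the contour, so the residue at $s=1$ appears directly as the unevaluated integral $G(1)=\int_0^\infty x\,e^{-\pi\alpha^2x^2}\cos(\sqrt{\pi}\alpha xz)\,dx$ and cancels transparently against the $\tfrac{\pi x}{2}$-term from the cotangent. The paper keeps the ${}_1F_1$ throughout, so its residue is $\tfrac{\sqrt{\pi}}{\alpha}{}_1F_1\!\left(-\tfrac12;\tfrac12;\tfrac{z^2}{4}\right)$, and to see the same cancellation it must separately evaluate $\int_0^\infty t\,e^{-\pi\alpha^2t^2}\cos(\sqrt{\pi}\alpha tz)\,dt$ as that same ${}_1F_1$ (their (\ref{2teg})). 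Your ordering is slicker here and also avoids the paper's second contour shift from $\Re s=1+\delta$ back to $0<c<1$. One small correction: after your substitution the vanishing of the horizontal segments is no longer ``by Stirling'' (the $\Gamma$-factors have been absorbed) but follows from the exponential decay of $1/\sin(\pi s/2)$ together with the uniform bound $|F(\sigma+iT)|\le\int_0^\infty x^\sigma e^{-\pi\alpha^2x^2}|\cos(\sqrt{\pi}\alpha xz)|\,dx$; this is immediate.
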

Obviously, the presence of $\tfrac{3}{2}$ instead of $\tfrac{1}{2}$ in $\rho\left(\alpha,z,\frac{3+it}{2}\right)$ destroys the invariance property under the simultaneous application of the maps $\alpha\to\beta$ and $z\to iz$. Regarding the special case when $z=0$ of the integral on the right-hand side of (\ref{rgenr}) (i.e., the integral on the right-hand side of (\ref{mrram})), Hardy says in \cite{ghh}, ``The integral has properties similar to those of the integral by means of which I proved recently that $\zeta(s)$ has an infinity of zeros on the line $\sigma=\frac{1}{2}$ and may be used for the same purpose.'' It may be interesting to see what information can be extracted from the general integral.\\

In \cite[p.~156, Section 2.5]{hl}, Hardy and Littlewood discuss the following amazing identity, actually a conjecture, involving infinite series of M\"{o}bius functions having its genesis in the work of Ramanujan \cite[p.~470]{berndt1}.\\

\textit{Let $\mu(n)$ denote the M\"obius function. Let $\alpha$ and $\beta$ be two positive numbers such that $\alpha\beta=1$. Assume that the series $\sum_{\rho}\left(\Gamma{\left(\frac{1-\rho}{2}\right)}/\zeta^{'}(\rho)\right)a^{\rho}$ converges, where $\rho$ runs through the non-trivial zeros of $\zeta(s)$ and $a$ denotes a positive real number, and that the non-trivial zeros of $\zeta(s)$ are simple. Then
\begin{align}\label{mr}
&\sqrt{\alpha}\sum_{n=1}^{\infty}\frac{\mu(n)}{n}e^{-\frac{\pi\alpha^2}{n^2}}-\frac{1}{4\sqrt{\pi}\sqrt{\alpha}}\sum_{\rho}\frac{\Gamma{\left(\frac{1-\rho}{2}\right)}}{\zeta^{'}(\rho)}\pi^{\frac{\rho}{2}}\alpha^{\rho}\nonumber\\
&=\sqrt{\beta}\sum_{n=1}^{\infty}\frac{\mu(n)}{n}e^{-\frac{\pi\beta^2}{n^2}}-\frac{1}{4\sqrt{\pi}\sqrt{\beta}}\sum_{\rho}\frac{\Gamma{\left(\frac{1-\rho}{2}\right)}}{\zeta^{'}(\rho)}\pi^{\frac{\rho}{2}}\beta^{\rho}.
\end{align}
}%
The original formulation, slightly different in \cite{hl}, can be easily seen to be equivalent to (\ref{mr}). See also \cite[p.~143]{kp} and \cite[p.~219, Section 9.8]{titch} for discussions of this identity. The above conjecture admits the following generalization, also of the form $F(z,\alpha)=F(iz,\beta)$.
\begin{theorem}\label{rhlg}
Let $\mu(n)$ denote the M\"obius function. Let $z\in\mathbb{C}$ and let $\alpha$ and $\beta$ be two positive numbers such that $\alpha\beta=1$. Assume that the series $\sum_{\rho}\frac{\Gamma{\left(\frac{1-\rho}{2}\right)}}{\zeta^{'}(\rho)}{}_1F_1\left(\frac{1-\rho}{2};\frac{1}{2};\frac{-z^2}{4}\right)\pi^{\frac{\rho}{2}}a^{\rho}$ converges, where $\rho$ runs through the non-trivial zeros of $\zeta(s)$ and $a$ denotes a positive real number, and that the non-trivial zeros of $\zeta(s)$ are simple. Then
\begin{align}\label{mrg}
&\sqrt{\alpha}e^{\frac{z^2}{8}}\sum_{n=1}^{\infty}\frac{\mu(n)}{n}e^{-\frac{\pi\alpha^2}{n^2}}\cos\left(\frac{\sqrt{\pi}\alpha z}{n}\right)-\frac{e^{\frac{z^2}{8}}}{4\sqrt{\pi}\sqrt{\alpha}}\sum_{\rho}\frac{\Gamma{\left(\frac{1-\rho}{2}\right)}}{\zeta^{'}(\rho)}{}_1F_1\left(\frac{1-\rho}{2};\frac{1}{2};-\frac{z^2}{4}\right)\pi^{\frac{\rho}{2}}\alpha^{\rho}\nonumber\\
&=\sqrt{\beta}e^{-\frac{z^2}{8}}\sum_{n=1}^{\infty}\frac{\mu(n)}{n}e^{-\frac{\pi\beta^2}{n^2}}\cosh\left(\frac{\sqrt{\pi}\beta z}{n}\right)-\frac{e^{-\frac{z^2}{8}}}{4\sqrt{\pi}\sqrt{\beta}}\sum_{\rho}\frac{\Gamma{\left(\frac{1-\rho}{2}\right)}}{\zeta^{'}(\rho)}{}_1F_1\left(\frac{1-\rho}{2};\frac{1}{2};\frac{z^2}{4}\right)\pi^{\frac{\rho}{2}}\beta^{\rho}.
\end{align}
\end{theorem}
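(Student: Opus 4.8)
\emph{Plan of proof.} I would prove \textup{(\ref{mrg})} by mimicking the classical derivation of \textup{(\ref{mr})} (cf.\ \cite[Section~9.8]{titch}, \cite{hl}): build a Mellin--Barnes integral which, on a vertical line to the right of $\Re s=1$, equals (up to normalisation) the left-hand side of \textup{(\ref{mrg})}; shift that line across the critical strip, picking up the residues at the nontrivial zeros $\rho$ of $\zeta$, which furnish the $\sum_{\rho}$ term; and identify the shifted integral, via the functional equation of $\zeta$ and Kummer's transformation \textup{(\ref{kft})}, with the right-hand side of \textup{(\ref{mrg})}. The one ingredient not already present when $z=0$ is the Mellin transform
\begin{equation*}
\int_{0}^{\infty}x^{s-1}e^{-x}\cos\left(z\sqrt{x}\right)dx=\Gamma(s)\,{}_1F_1\left(s;\tfrac12;-\tfrac{z^{2}}{4}\right)\qquad(\Re s>0),
\end{equation*}
obtained by expanding $\cos$ in its power series, integrating term by term and using $(2k)!=4^{k}k!\,(\tfrac12)_{k}$. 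By \textup{(\ref{kft})} this is exactly the function which, after $s\mapsto\tfrac{1-s}{2}$, produces the building block $\rho(\,\cdot\,,z,s)$ of \textup{(\ref{rho})}, so that the computation in \textup{(\ref{sp1})} applies here verbatim.

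\emph{The representation.} Fix $\lambda$ with $1<\lambda<2$ and put
\begin{equation*}
I(\alpha,z):=\frac{1}{2\pi i}\int_{(\lambda)}\frac{\Gamma\left(\frac{1-s}{2}\right)}{\zeta(s)}\,\pi^{\frac{s-1}{2}}\,\alpha^{s-1/2}\,e^{\frac{z^{2}}{8}}\,{}_1F_1\left(\tfrac{1-s}{2};\tfrac12;-\tfrac{z^{2}}{4}\right)ds .
\end{equation*}
On $\Re s=\lambda>1$ one expands $1/\zeta(s)=\sum_{n\ge1}\mu(n)n^{-s}$ and interchanges the (absolutely convergent) sum with the integral; evaluating each inner integral by the Mellin transform above --- after $s\mapsto\tfrac{1-s}{2}$, the resulting contour being pushed past the simple pole at the origin, whose residue is ${}_1F_1(0;\tfrac12;-\tfrac{z^{2}}{4})=1$ --- collapses $I(\alpha,z)$, up to an elementary constant, to $\sqrt{\alpha}\,e^{z^{2}/8}\sum_{n\ge1}\tfrac{\mu(n)}{n}e^{-\pi\alpha^{2}/n^{2}}\cos(\sqrt{\pi}\alpha z/n)$ plus a term proportional to $\sum_{n\ge1}\mu(n)/n=0$. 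Thus $I(\alpha,z)$ is, up to normalisation, the left-hand side of \textup{(\ref{mrg})}, and $I(\beta,iz)$ is the right-hand side.

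\emph{Contour shift, functional equation and symmetrisation.} Move the line in $I(\alpha,z)$ from $\Re s=\lambda$ to $\Re s=1-\lambda$. In the strip $1-\lambda<\Re s<\lambda$ the integrand is meromorphic with poles only at the nontrivial zeros $\rho$: it is regular at $s=1$ (the pole of $\Gamma\left(\frac{1-s}{2}\right)$ being cancelled by the simple zero of $1/\zeta$) and at $s=0$ (where $\zeta(0)=-\tfrac12\neq0$), while the trivial zeros lie to the left of $\Re s=1-\lambda$. By the hypothesised simplicity of the $\rho$, the residue at $s=\rho$ is a fixed constant times $\frac{\Gamma((1-\rho)/2)}{\zeta'(\rho)}\,{}_1F_1(\tfrac{1-\rho}{2};\tfrac12;-\tfrac{z^{2}}{4})\,\pi^{\rho/2}\alpha^{\rho-1/2}e^{z^{2}/8}$, and the convergence assumption of the theorem is exactly what legitimises both the shift (decay of the tails) and the rearrangement of the residue series; one obtains $I(\alpha,z)=\tfrac{1}{2\pi i}\int_{(1-\lambda)}(\cdots)\,ds+\sum_{\rho}\Res_{s=\rho}(\cdots)$. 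In the remaining integral I would substitute $s\mapsto1-s$ and apply $\pi^{-s/2}\Gamma(\tfrac{s}{2})\zeta(s)=\pi^{-(1-s)/2}\Gamma(\tfrac{1-s}{2})\zeta(1-s)$ together with \textup{(\ref{kft})}: this is the manipulation in \textup{(\ref{sp1})}, and it shows the reflected integrand to be the integrand of $I$ with $(\alpha,z)$ replaced by $(\beta,iz)$, so $\tfrac{1}{2\pi i}\int_{(1-\lambda)}(\cdots)\,ds=I(\beta,iz)$ and hence $I(\alpha,z)-\sum_{\rho}\Res_{s=\rho}(\cdots)=I(\beta,iz)$. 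Finally, writing the residue sum as two equal halves, reindexing one by the bijection $\rho\mapsto1-\rho$ of the zero set (which preserves $|\Im\rho|$) and using the relation between $\zeta'(\rho)$ and $\zeta'(1-\rho)$ obtained by differentiating $\xi(s)=\xi(1-s)$ at $s=\rho$, one converts that half into the $\beta$-side $\sum_{\rho}$ term (with the opposite sign and with hypergeometric argument $+z^{2}/4$); \textup{(\ref{mrg})} follows. Setting $z=0$ throughout recovers the proof of \textup{(\ref{mr})}.

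\emph{Main obstacle.} The substance of the argument is analytic: one must justify (i) the interchange of $\sum_{n}$ and $\int_{(\lambda)}$; (ii) the vanishing of the horizontal sides of the shifting rectangle as $\Im s\to\pm\infty$, which forces the truncation heights to be chosen so that $|\zeta(s)|$ is not too small within the critical strip (the familiar device) while the exponential decay of $\Gamma\left(\frac{1-s}{2}\right)$ prevails; and --- the point that is absent when $z=0$ --- (iii) the growth of ${}_1F_1\left(\tfrac{1-s}{2};\tfrac12;-\tfrac{z^{2}}{4}\right)$ along vertical lines. For a fixed argument this confluent hypergeometric function grows, along vertical lines, only subexponentially, like $\exp(c\,|t|^{1/2})$ with $t=\Im s$, which the exponential decay of the Gamma factor absorbs, so every integral and series in sight still converges; carrying this out uniformly --- and checking that it does not disturb the choice of truncation heights in (ii) --- is where I expect the main difficulty to lie.
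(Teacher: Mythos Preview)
Your proposal is correct and follows essentially the same route as the paper: a Mellin--Barnes representation of the $\alpha$-side, a contour shift across the critical strip collecting residues at the nontrivial zeros (with the standard choice of truncation heights to control $1/|\zeta|$, and the subexponential growth of the ${}_1F_1$ absorbed by the Gamma decay via \textup{(\ref{conasy})}), and identification of the reflected integral with the $\beta$-side through the functional equation together with Kummer's transformation.

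The only notable difference is in the final symmetrisation. The paper first obtains the asymmetric identity
\[
\sqrt{\alpha}\,e^{z^{2}/8}\sum_{n}(\cdots)-\sqrt{\beta}\,e^{-z^{2}/8}\sum_{n}(\cdots)
=-\frac{e^{-z^{2}/8}}{2\sqrt{\pi}\sqrt{\beta}}\sum_{\rho}(\cdots),
\]
then applies the substitution $(\alpha,z)\mapsto(\beta,iz)$ to it and adds, which immediately yields the relation between the two residue sums and hence \textup{(\ref{mrg})}. Your plan instead halves the residue sum and reindexes one half by $\rho\mapsto 1-\rho$, invoking the relation between $\zeta'(\rho)$ and $\zeta'(1-\rho)$ obtained from $\xi'(s)=-\xi'(1-s)$. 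This works (one also needs Kummer once more to flip the sign in the hypergeometric argument), but is more laborious; the paper's device avoids any explicit manipulation of $\zeta'$ at the zeros.
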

This paper is organized as follows. In Section 2, we discuss preliminary results which are subsequently used in the later sections. In Section 3, we obtain a line integral representation for the integral in (\ref{sp0}). Then in Sections 4, 5, 6 and 7, we prove Theorems 1.2, 1.3, 1.4 and 1.5 respectively. In Section 8, we give proof of Theorem 1.6. Finally, we conclude the paper with some remarks on further developments that may be possible.
\section{Preliminary results}
The Riemann zeta function $\zeta(s)$ is defined for Re $s>1$ by the absolutely convergent Dirichlet series
\begin{equation}\label{zzdefgr}
\zeta(s)=\sum_{m=1}^{\infty}\frac{1}{m^{s}}.
\end{equation}
It can be analytically continued first to $0<$ Re $s<1$ by an elementary argument and then to the whole complex plane, except for a simple pole at $s=1$, by means of the following functional equation \cite[p.~22, eqn. (2.6.4)]{titch}
\begin{equation}\label{zetafe}
\pi^{-\frac{s}{2}}\Gamma\left(\frac{s}{2}\right)\zeta(s)=\pi^{-\frac{(1-s)}{2}}\Gamma\left(\frac{1-s}{2}\right)\zeta(1-s),
\end{equation}
which can also be written in the form 
\begin{equation}\label{zetaalt}
\xi(s)=\xi(1-s),
\end{equation}
where $\xi(s)$ is the Riemann $\xi$-function defined in (\ref{xi}). We also need some basic properties of the Gamma function $\Gamma(s)$. 
The reflection formula for the Gamma function \cite[p.~46]{temme} is given by
\begin{equation}\label{rf}
\displaystyle\Gamma(s)\Gamma(1-s)=\frac{\pi}{\sin \pi s},
\end{equation}
for $s\notin\mathbb{Z}$. Further, Legendre's duplication formula \cite[p.~46]{temme} gives
\begin{equation}\label{dup}
\Gamma(s)\Gamma\left(s+\frac{1}{2}\right)=\frac{\sqrt{\pi}}{2^{2s-1}}\Gamma(2s),
\end{equation}
Stirling's formula for $\Gamma(s)$, $s=\sigma+it$, in a vertical strip $\alpha\leq\sigma\leq\beta$ is given by
\begin{equation}\label{strivert}
|\Gamma(s)|=(2\pi)^{\tf{1}{2}}|t|^{\sigma-\tf{1}{2}}e^{-\tf{1}{2}\pi |t|}\left(1+O\left(\frac{1}{|t|}\right)\right),
\end{equation}
as $|t|\to\infty$. We will also require the inverse Mellin transform representation of the function $e^{-ax^2}\cos bx$ \cite[p.~47, Equation 5.30]{ober}, valid for $c=$ Re $s>0$, and given by
\begin{equation}\label{invmel}
e^{-ax^2}\cos bx=\frac{1}{2\pi i}\int_{c-i\infty}^{c+i\infty}\frac{1}{2}a^{-\tfrac{s}{2}}\Gamma\left(\frac{s}{2}\right)e^{-\frac{b^2}{4a}}{}_1F_{1}\left(\frac{1-s}{2};\frac{1}{2};\frac{b^2}{4a}\right)x^{-s}\, ds,
\end{equation}
which can be easily proved by employing the series representation of ${}_1F_{1}$ and then interchanging the order of summation and integration. Finally, we require the asymptotic expansion, for large values of $|\lambda|$, of the Whittaker function $M_{\lambda,\mu}(z)$ defined by \cite[p.~1024, formula 9.220, no. 2]{grn}
\begin{equation}\label{whi}
M_{\lambda,\mu}(z)=z^{\mu+\frac{1}{2}}e^{-z/2}{}_1F_{1}\left(\mu-\lambda+\tfrac{1}{2};2\mu+1;z\right).
\end{equation}
Its asymptotic expansion \cite[p.~1026, formula 9.228]{grn} is given by
\begin{equation}\label{whias}
M_{\lambda,\mu}(z)\sim \frac{1}{\sqrt{\pi}}\Gamma(2\mu+1)\lambda^{-\mu-\frac{1}{4}}z^{1/4}\cos\left(2\sqrt{\lambda z}-\mu\pi-\frac{\pi}{4}\right),
\end{equation}
as $|\lambda|\to\infty$. Letting $\mu=-\frac{1}{4}$ and replacing $z$ by $z^2/4$ in (\ref{whias}) and using (\ref{whi}), we obtain, upon simplification,
\begin{equation}\label{conasy}
{}_1F_{1}\left(\tfrac{1}{4}-\lambda;\tfrac{1}{2};\tfrac{z^{2}}{4}\right)\sim e^{z^2/8}\cos\left(\sqrt{\lambda}z\right),
\end{equation}
as $|\lambda|\to\infty$.
\section{A line integral representation}
Here we give a line integral representation for the integral in (\ref{sp0}) that will allow us to use the residue theorem and Mellin transforms for its evaluation. 
\begin{theorem}
Let 
\begin{equation*}
f(t)=\phi(it)\phi(-it),
\end{equation*} 
where $\phi$ is analytic in $t$ as a function of a real variable. Let $\nabla(x,z,s)$ and $\rho(x,z,s)$ be defined as in \textup{(\ref{nabla})} and \textup{(\ref{rho})}. Assume that the integral on the left side below converges. Then,
\begin{equation}\label{sp3}
\int_{0}^{\infty}f(t)\Xi(t)\nabla\left(\alpha,z,\frac{1}{2}+it\right)\, dt
=\frac{1}{i}\int_{\frac{1}{2}-i\infty}^{\frac{1}{2}+i\infty}\phi\left(s-\frac{1}{2}\right)\phi\left(\frac{1}{2}-s\right)\xi(s)\rho(\alpha,z,s)\, ds.
\end{equation}
\end{theorem}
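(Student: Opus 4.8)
The plan is to recognize that (\ref{sp3}) is, up to elementary manipulations, just the statement that the vertical-line integral on the right is the parametrization $s=\tfrac12+it$ of the real integral on the left, after one uses the functional equation to symmetrize the integrand. First I would work on the right-hand side of (\ref{sp3}). By the change of variable $s\mapsto 1-s$, which maps the line $\mathrm{Re}\,s=\tfrac12$ to itself (the reversal of orientation being absorbed by $ds\mapsto -ds$), together with $\xi(1-s)=\xi(s)$ from (\ref{zetaalt}) and the fact that $\phi(s-\tfrac12)\phi(\tfrac12-s)$ is invariant under $s\mapsto 1-s$, one gets
\begin{equation*}
\frac1i\int_{\frac12-i\infty}^{\frac12+i\infty}\phi\!\left(s-\tfrac12\right)\phi\!\left(\tfrac12-s\right)\xi(s)\,\rho(\alpha,z,s)\,ds=\frac1i\int_{\frac12-i\infty}^{\frac12+i\infty}\phi\!\left(s-\tfrac12\right)\phi\!\left(\tfrac12-s\right)\xi(s)\,\rho(\alpha,z,1-s)\,ds .
\end{equation*}
Averaging the two equal expressions and invoking the definition (\ref{nabla}) of $\nabla$ turns the right side of (\ref{sp3}) into
\begin{equation*}
\frac{1}{2i}\int_{\frac12-i\infty}^{\frac12+i\infty}\phi\!\left(s-\tfrac12\right)\phi\!\left(\tfrac12-s\right)\xi(s)\,\nabla(\alpha,z,s)\,ds .
\end{equation*}

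Next I would parametrize this contour integral by $s=\tfrac12+it$, $-\infty<t<\infty$, so that $ds=i\,dt$. Then $s-\tfrac12=it$ and $\tfrac12-s=-it$, hence $\phi(s-\tfrac12)\phi(\tfrac12-s)=\phi(it)\phi(-it)=f(t)$; by the definition (\ref{xif}) of $\Xi$ we have $\xi(s)=\xi(\tfrac12+it)=\Xi(t)$; and $\nabla(\alpha,z,s)=\nabla(\alpha,z,\tfrac12+it)$. The factors $\tfrac1i$ and $ds=i\,dt$ cancel, so the right side of (\ref{sp3}) becomes
\begin{equation*}
\frac12\int_{-\infty}^{\infty}f(t)\,\Xi(t)\,\nabla\!\left(\alpha,z,\tfrac12+it\right)\,dt .
\end{equation*}
Finally I would observe that the integrand here is even in $t$: indeed $f(-t)=\phi(-it)\phi(it)=f(t)$; by (\ref{zetaalt}), $\Xi(-t)=\xi(\tfrac12-it)=\xi(\tfrac12+it)=\Xi(t)$; and directly from (\ref{nabla}), $\nabla(\alpha,z,\tfrac12-it)=\rho(\alpha,z,\tfrac12-it)+\rho(\alpha,z,\tfrac12+it)=\nabla(\alpha,z,\tfrac12+it)$. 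Hence $\tfrac12\int_{-\infty}^{\infty}$ collapses to $\int_{0}^{\infty}$, which is precisely the left side of (\ref{sp3}).

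The main (and really only) point requiring care is the legitimacy of these operations under nothing more than the stated hypothesis that the left-hand integral of (\ref{sp3}) converges. Reading the vertical-line integral in the symmetric sense $\lim_{T\to\infty}\int_{\frac12-iT}^{\frac12+iT}$, the change of variable $s\mapsto 1-s$ and the averaging above are valid once that symmetric limit exists, and it does: writing the integrand of $\int_{-\infty}^{\infty}f(t)\Xi(t)\rho(\alpha,z,\tfrac12+it)\,dt$ as its even plus odd parts in $t$, the odd part integrates to $0$ over each $[-T,T]$, while the even part equals $\tfrac12 f(t)\Xi(t)\nabla(\alpha,z,\tfrac12+it)$, which is integrable on $(0,\infty)$ — hence on $\mathbb{R}$ — by hypothesis. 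In particular one never has to split $\nabla$ into two $\rho$-pieces that might individually diverge, and no growth assumption on $\phi$ beyond analyticity is needed. I do not expect any deeper obstacle: the whole argument is bookkeeping with the functional equation (\ref{zetaalt}) and the definitions (\ref{nabla}), (\ref{rho}), (\ref{xif}) of $\nabla$, $\rho$, and $\Xi$.
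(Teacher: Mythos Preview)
Your proposal is correct and follows essentially the same route as the paper's proof, merely run in the opposite direction: the paper starts from the left side, uses evenness to pass to $\tfrac12\int_{-\infty}^{\infty}$, substitutes $s=\tfrac12+it$, splits $\nabla$ into $\rho(\alpha,z,s)+\rho(\alpha,z,1-s)$, and then uses $\xi(1-s)=\xi(s)$ together with $s\mapsto 1-s$ to show the two pieces are equal, while you start from the right side and reverse these same steps. Your added paragraph on interpreting the vertical-line integral symmetrically and justifying the manipulations from the sole convergence hypothesis is a welcome clarification that the paper leaves implicit.
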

\begin{proof}
Let $I(z,\alpha)$ denote the left-hand side of (\ref{sp3}). Then using the facts that $f(t), \Xi(t)$ and $\nabla\left(\alpha,z,\frac{1}{2}+it\right)$ are all even functions of $t$, we have
{\allowdisplaybreaks\begin{align}\label{enta1}
I(z,\alpha)&=\frac{1}{2}\left(\int_{0}^{\infty}f(t)\Xi(t)\nabla\left(\alpha,z,\frac{1}{2}+it\right)\, dt-\int_{0}^{-\infty}f(-t)\Xi(-t)\nabla\left(\alpha,z,\frac{1}{2}-it\right)\, dt\right)\nonumber\\
&=\frac{1}{2}\int_{-\infty}^{\infty}f(t)\Xi(t)\nabla\left(\alpha,z,\frac{1}{2}+it\right)\, dt\nonumber\\
&=\frac{1}{2i}\int_{\frac{1}{2}-i\infty}^{\frac{1}{2}+i\infty}\phi\left(s-\frac{1}{2}\right)\phi\left(\frac{1}{2}-s\right)\xi(s)\left(\rho(\alpha,z,s)+\rho(\alpha,z,1-s)\right)\, ds\nonumber\\
&=\frac{1}{2i}\left(I_{1}(z,\alpha)+I_{2}(z,\alpha)\right),
\end{align}}
where 
\begin{align*}
I_{1}(z,\alpha)&:=\int_{\frac{1}{2}-i\infty}^{\frac{1}{2}+i\infty}\phi\left(s-\frac{1}{2}\right)\phi\left(\frac{1}{2}-s\right)\xi(s)\rho(\alpha,z,s)\, ds\nonumber\\
I_{2}(z,\alpha)&:=\int_{\frac{1}{2}-i\infty}^{\frac{1}{2}+i\infty}\phi\left(s-\frac{1}{2}\right)\phi\left(\frac{1}{2}-s\right)\xi(s)\rho(\alpha,z,1-s)\, ds,\nonumber\\
\end{align*}
and in the penultimate step in (\ref{enta1}), we have performed a change of variable $s=\frac{1}{2}+it$. Now rewriting $I_{2}(z,\alpha)$ by employing (\ref{zetaalt}), and then replacing $s$ by $1-s$, we easily see that
\begin{align}\label{enta2}
I_{2}(z,\alpha)&=\int_{\frac{1}{2}-i\infty}^{\frac{1}{2}+i\infty}\phi\left(\frac{1}{2}-(1-s)\right)\phi\left(1-s-\frac{1}{2}\right)\xi(1-s)\rho(\alpha,z,1-s)\, ds,\nonumber\\
&=\int_{\frac{1}{2}-i\infty}^{\frac{1}{2}+i\infty}\phi\left(s-\frac{1}{2}\right)\phi\left(\frac{1}{2}-s\right)\xi(s)\rho(\alpha,z,s)\, ds\nonumber\\
&=I_{1}(z,\alpha).
\end{align}
Hence, substituting (\ref{enta2}) in (\ref{enta1}), we obtain (\ref{sp3}).
\end{proof}
For our purposes, we will use the following alternative form of (\ref{sp3}), which is easily obtained by replacing $t$ by $t/2$ on the left-hand side of (\ref{sp3}):
\begin{equation}\label{sp4}
\int_{0}^{\infty}f\left(\frac{t}{2}\right)\Xi\left(\frac{t}{2}\right)\nabla\left(\alpha,z,\frac{1+it}{2}\right)\, dt
=\frac{2}{i}\int_{\frac{1}{2}-i\infty}^{\frac{1}{2}+i\infty}\phi\left(s-\frac{1}{2}\right)\phi\left(\frac{1}{2}-s\right)\xi(s)\rho(\alpha,z,s)\, ds.
\end{equation}
\section{Proof of Theorem \ref{thetasym}: Extended version of the general theta transformation formula}
Using (\ref{xif}), (\ref{xi}), (\ref{nabla}), (\ref{rho}), (\ref{strivert}) and (\ref{conasy}), we easily see that the integral on the extreme right-hand side of (\ref{eqsym}) converges. Let $\phi(t)=\frac{1}{t+1/2}$ so that $f(t)=\phi(it)\phi(-it)=\frac{1}{t^{2}+1/4}$. Substituting this in (\ref{sp4}) and using (\ref{xi}) and (\ref{rho}), we have
{\allowdisplaybreaks\begin{align}\label{sp4.5}
&4\int_{0}^{\infty}\frac{\Xi(t/2)}{1+t^2}\nabla\left(\alpha,z,\frac{1+it}{2}\right)\, dt\nonumber\\
&=i\int_{\frac{1}{2}-i\infty}^{\frac{1}{2}+i\infty}\pi^{-\frac{s}{2}}\Gamma\left(\frac{s}{2}\right)\zeta(s)\rho(\alpha,z,s)\, ds\nonumber\\
&=i\alpha^{\frac{1}{2}}e^{-\frac{z^2}{8}}\int_{\frac{1}{2}-i\infty}^{\frac{1}{2}+i\infty}\Gamma\left(\frac{s}{2}\right)\zeta(s){}_1F_{1}\left(\frac{1-s}{2};\frac{1}{2};\frac{z^2}{4}\right)(\sqrt{\pi}\alpha)^{-s}\, ds.
\end{align}}

To evaluate the last integral, we shift the line of integration from Re $s=1/2$ to Re $s=1+\delta$, $\delta>0$, so that we can use (\ref{zzdefgr}). Consider a positively oriented rectangular contour with sides $[\frac{1}{2}+iT, \frac{1}{2}-iT], [\frac{1}{2}-iT, 1+\delta-iT], [1+\delta-iT,1+\delta+iT]$ and $[1+\delta+iT,\frac{1}{2}+iT]$, where $T$ is any positive real number. While shifting, we encounter the pole of the integrand at $s=1$. Hence, using the residue theorem, we have
{\allowdisplaybreaks\begin{align}\label{resimpact}
&\int_{\frac{1}{2}-iT}^{\frac{1}{2}+iT}\Gamma\left(\frac{s}{2}\right)\zeta(s){}_1F_{1}\left(\frac{1-s}{2};\frac{1}{2};\frac{z^2}{4}\right)(\sqrt{\pi}\alpha)^{-s}\, ds\nonumber\\
&=\left[\int_{\tf{1}{2}-iT}^{1+\delta-iT}+\int_{1+\delta-iT}^{1+\delta+iT}+\int_{1+\delta+iT}^{\tf{1}{2}+iT}\right]\Gamma\left(\frac{s}{2}\right)\zeta(s){}_1F_{1}\left(\frac{1-s}{2};\frac{1}{2};\frac{z^2}{4}\right)(\sqrt{\pi}\alpha)^{-s}\, ds\nonumber\\
&\quad-2\pi i\lim_{s\to 1}(s-1)\Gamma\left(\frac{s}{2}\right)\zeta(s){}_1F_{1}\left(\frac{1-s}{2};\frac{1}{2};\frac{z^2}{4}\right)(\sqrt{\pi}\alpha)^{-s}.
\end{align}}%
Using (\ref{strivert}), one easily sees that the integrals on the horizontal segments $[\tf{1}{2}-iT,1+\delta-iT]$ and $[1+\delta+iT, \tf{1}{2}+iT]$ tend to zero as $T\to\infty$. Also,
\begin{align}\label{calres}
\lim_{s\to 1}(s-1)\Gamma\left(\frac{s}{2}\right)\zeta(s){}_1F_{1}\left(\frac{1-s}{2};\frac{1}{2};\frac{z^2}{4}\right)(\sqrt{\pi}\alpha)^{-s}=\frac{1}{\alpha}.
\end{align}
Therefore, letting $T\to\infty$ in (\ref{resimpact}), using (\ref{calres}) and (\ref{zzdefgr}) in the integral over $[1+\delta-i\infty,1+\delta+i\infty]$, we have
\begin{align}\label{resconc}
&\int_{\frac{1}{2}-i\infty}^{\frac{1}{2}+i\infty}\Gamma\left(\frac{s}{2}\right)\zeta(s){}_1F_{1}\left(\frac{1-s}{2};\frac{1}{2};\frac{z^2}{4}\right)(\sqrt{\pi}\alpha)^{-s}\, ds\nonumber\\
&=\int_{1+\delta-i\infty}^{1+\delta+i\infty}\sum_{n=1}^{\infty}\Gamma\left(\frac{s}{2}\right){}_1F_{1}\left(\frac{1-s}{2};\frac{1}{2};\frac{z^2}{4}\right)(\sqrt{\pi}\alpha n)^{-s}\, ds-\frac{2\pi i}{\alpha}\nonumber\\
&=\sum_{n=1}^{\infty}\int_{1+\delta-i\infty}^{1+\delta+i\infty}\Gamma\left(\frac{s}{2}\right){}_1F_{1}\left(\frac{1-s}{2};\frac{1}{2};\frac{z^2}{4}\right)(\sqrt{\pi}\alpha n)^{-s}\, ds-\frac{2\pi i}{\alpha},
\end{align}
where in the last step, we have interchanged the order of summation and integration, which is valid because of absolute convergence.

Letting $a=1,x=\sqrt{\pi}\alpha n, b=z$ in (\ref{invmel}), we see that
\begin{equation}\label{invmel1}
\int_{1+\delta-i\infty}^{1+\delta+i\infty}\Gamma\left(\frac{s}{2}\right){}_1F_{1}\left(\frac{1-s}{2};\frac{1}{2};\frac{z^2}{4}\right)(\sqrt{\pi}\alpha n)^{-s}\, ds=4\pi ie^{-\pi\alpha^2n^2+z^2/4}\cos\left(\sqrt{\pi}\alpha nz\right).
\end{equation}
Now (\ref{sp4.5}), (\ref{resconc}) and (\ref{invmel1}) imply that
\begin{equation}\label{fingtf}
\frac{1}{\pi}\int_{0}^{\infty}\frac{\Xi(t/2)}{1+t^2}\nabla\left(\alpha,z,\frac{1+it}{2}\right)\, dt=\sqrt{\alpha}\left(\frac{e^{-\frac{z^2}{8}}}{2\alpha}-e^{\frac{z^2}{8}}\sum_{n=1}^{\infty}e^{-\pi\alpha^2n^2}\cos(\sqrt{\pi}\alpha nz)\right).
\end{equation}
Finally, replacing $z$ by $iz$ and $\alpha$ by $\beta$ in (\ref{fingtf}), noting that the integral on the left-hand side remains invariant in this process, and then combining the result with (\ref{fingtf}), we arrive at (\ref{eqsym}).
\section{Generalization of Hardy's formula}
Let $\phi(s)=\frac{1}{4\sqrt{2}\pi}\Gamma\left(\frac{1}{4}+\frac{s}{2}\right)\Gamma\left(\frac{-1}{4}+\frac{s}{2}\right)$, so that 
\begin{equation*}
f(t)=\phi(it)\phi(-it)=\frac{1}{32\pi^2}\Gamma\left(\frac{1}{4}+\frac{it}{2}\right)\Gamma\left(\frac{-1}{4}+\frac{it}{2}\right)\Gamma\left(\frac{1}{4}-\frac{it}{2}\right)\Gamma\left(\frac{-1}{4}-\frac{it}{2}\right).
\end{equation*}
Applying (\ref{rf}) twice, we easily see that $f\left(\frac{t}{2}\right)=1/((1+t^2)\cosh \tfrac{1}{2}\pi t)$. Using these facts along with (\ref{xif}), (\ref{xi}), (\ref{nabla}), (\ref{rho}), (\ref{strivert}) and (\ref{conasy}), we find that the integral on the extreme right-hand side of (\ref{hfg}) converges. Substituting this in (\ref{sp4}) and using (\ref{xi}) and (\ref{rho}), we have
\begin{align}\label{nsphar}
&\int_{0}^{\infty}\frac{\Xi(\tf{1}{2}t)}{1+t^2}\frac{\nabla\left(\alpha, z, \frac{1+it}{2}\right)}{\cosh \tf{1}{2}\pi t}\, dt\nonumber\\
&=\frac{1}{16\pi^2 i}\int_{\frac{1}{2}-i\infty}^{\frac{1}{2}+i\infty}\pi^{-\frac{s}{2}}\frac{s(s-1)}{2}\Gamma^{2}\left(\frac{s}{2}\right)\Gamma\left(\frac{s-1}{2}\right)\Gamma\left(\frac{1-s}{2}\right)\Gamma\left(\frac{-s}{2}\right)\zeta(s)\rho(\alpha,z,s)\, ds\nonumber\\
&=-\frac{\alpha^{\frac{1}{2}}e^{-\frac{z^2}{8}}}{4i}\int_{\frac{1}{2}-i\infty}^{\frac{1}{2}+i\infty}\Gamma\left(\frac{s}{2}\right)\frac{\zeta(s)}{\sin\pi s}{}_1F_{1}\left(\frac{1-s}{2};\frac{1}{2};\frac{z^2}{4}\right)(\sqrt{\pi}\alpha)^{-s}\, ds,
\end{align}
where in the last step, we used (\ref{dup}) as well as (\ref{rf}).

To evaluate the last integral, we shift the line of integration from Re $s=1/2$ to Re $s=1+\delta$, $\delta>0$, so that we can use (\ref{zzdefgr}). Consider a positively oriented rectangular contour with sides $[\frac{1}{2}+iT, \frac{1}{2}-iT], [\frac{1}{2}-iT, 1+\delta-iT], [1+\delta-iT,1+\delta+iT]$ and $[1+\delta+iT,\frac{1}{2}+iT]$, where $T$ is any positive real number. While shifting the line of integration, we encounter the pole of order two of the integrand (due to $\zeta(s)$ and $\sin\pi s$) at $s=1$. Hence, using the residue theorem, we have
\begin{align}\label{resimpact2}
&\int_{\frac{1}{2}-iT}^{\frac{1}{2}+iT}\Gamma\left(\frac{s}{2}\right)\frac{\zeta(s)}{\sin\pi s}{}_1F_{1}\left(\frac{1-s}{2};\frac{1}{2};\frac{z^2}{4}\right)(\sqrt{\pi}\alpha)^{-s}\, ds\nonumber\\
&=\left[\int_{\tf{1}{2}-iT}^{1+\delta-iT}+\int_{1+\delta-iT}^{1+\delta+iT}+\int_{1+\delta+iT}^{\tf{1}{2}+iT}\right]\Gamma\left(\frac{s}{2}\right)\frac{\zeta(s)}{\sin\pi s}{}_1F_{1}\left(\frac{1-s}{2};\frac{1}{2};\frac{z^2}{4}\right)(\sqrt{\pi}\alpha)^{-s}\, ds\nonumber\\
&\quad-2\pi iL,
\end{align}
where
\begin{align}\label{LL}
L&:=\lim_{s\to 1}\frac{d}{ds}\left((s-1)^2\Gamma\left(\frac{s}{2}\right)\frac{\zeta(s)}{\sin\pi s}{}_1F_{1}\left(\frac{1-s}{2};\frac{1}{2};\frac{z^2}{4}\right)(\sqrt{\pi}\alpha)^{-s}\right)\nonumber\\
&=L_{1}+L_{2},
\end{align}
where
\begin{align*}
L_{1}&:=\lim_{s\to 1}\left\{\frac{d}{ds}\left((s-1)^2\Gamma\left(\frac{s}{2}\right)\frac{\zeta(s)}{\sin\pi s}(\sqrt{\pi}\alpha)^{-s}\right){}_1F_{1}\left(\frac{1-s}{2};\frac{1}{2};\frac{z^2}{4}\right)\right\},\nonumber\\
L_{2}&:=\lim_{s\to 1}\left\{(s-1)^2\Gamma\left(\frac{s}{2}\right)\frac{\zeta(s)}{\sin\pi s}(\sqrt{\pi}\alpha)^{-s}\frac{d}{ds}{}_1F_{1}\left(\frac{1-s}{2};\frac{1}{2};\frac{z^2}{4}\right)\right\}.
\end{align*}
Using (5.12) from \cite{series}, (\ref{xi}) and (\ref{rf}), we observe that
\begin{align}\label{dutk}
L_{1}&=\lim_{s\to 1}\frac{d}{ds}\left((s-1)^2\Gamma\left(\frac{s}{2}\right)\frac{\zeta(s)}{\sin\pi s}(\sqrt{\pi}\alpha)^{-s}\right)\nonumber\\
&=\frac{-2}{\pi}\lim_{s\to 1}\frac{d}{ds}\left((s-1)^2\Gamma(s-1)\Gamma(-s)\xi(s)\alpha^{-s}\right)\nonumber\\
&=(-\gamma+\log\left(4\pi\alpha^2\right))/(2\pi\alpha).
\end{align}
Now, 
\begin{align*}
\frac{d}{ds}{}_1F_{1}\left(\frac{1-s}{2};\frac{1}{2};\frac{z^2}{4}\right)&=\frac{d}{ds}\sum_{n=1}^{\infty}\frac{((1-s)/2)_{n}}{(1/2)_{n}}\frac{(z^2/4)^n}{n!}\nonumber\\
&=\sum_{n=1}^{\infty}\frac{(z^2/4)^n}{(1/2)_{n}n!}\frac{d}{ds}\prod_{j=0}^{n-1}\left(\frac{1-s}{2}+j\right)\nonumber\\
&=-\frac{1}{2}\sum_{n=1}^{\infty}\frac{(z^2/4)^n}{(1/2)_{n}n!}\left(\frac{1-s}{2}\right)_{n}\sum_{j=0}^{n-1}\frac{1}{\frac{1-s}{2}+j},
\end{align*}
so that
\begin{align}\label{hypcal1}
\lim_{s\to 1}\frac{d}{ds}{}_1F_{1}\left(\frac{1-s}{2};\frac{1}{2};\frac{z^2}{4}\right)&=-\frac{1}{2}\lim_{s\to 1}\sum_{n=1}^{\infty}\frac{(z^2/4)^n}{(1/2)_{n}n!}\left(\frac{3-s}{2}\right)_{n-1}\nonumber\\
&=-\frac{1}{2}\sum_{n=1}^{\infty}\frac{(z^2/4)^n}{(1/2)_{n}n}\nonumber\\
&=-\frac{z^2}{4}{}_2F_{2}(1,1;3/2,2;z^2/4).
\end{align}
Hence, $L_{2}=\frac{z^2}{4\pi\alpha}\cdot{}_2F_{2}(1,1;3/2,2;z^2/4)$ so that, by (\ref{LL}) and (\ref{dutk}),
\begin{equation}\label{vll}
L=\frac{1}{2\pi\alpha}\left(-\gamma+\log\left(4\pi\alpha^2\right)+\frac{z^2}{2}{}_2F_{2}(1,1;3/2,2;z^2/4)\right).
\end{equation}
As before, using (\ref{strivert}), one easily sees that the integrals on the horizontal segments $[\tf{1}{2}-iT,1+\delta-iT]$ and $[1+\delta+iT, \tf{1}{2}+iT]$ tend to zero as $T\to\infty$. Thus it remains to evaluate 
\begin{align}\label{hcalj0}
J(z,\alpha)&:=\int_{1+\delta-i\infty}^{1+\delta+i\infty}\Gamma\left(\frac{s}{2}\right)\frac{\zeta(s)}{\sin\pi s}{}_1F_{1}\left(\frac{1-s}{2};\frac{1}{2};\frac{z^2}{4}\right)(\sqrt{\pi}\alpha)^{-s}\, ds\nonumber\\
&=\sum_{n=1}^{\infty}\int_{1+\delta-i\infty}^{1+\delta+i\infty}\frac{\Gamma\left(\frac{s}{2}\right)}{\sin\pi s}{}_1F_{1}\left(\frac{1-s}{2};\frac{1}{2};\frac{z^2}{4}\right)(\sqrt{\pi}\alpha n)^{-s}\, ds\nonumber\\
&=:\sum_{n=1}^{\infty}J(z,\alpha,n),
\end{align}
where we have interchanged the order of summation and integration, which is valid because of absolute convergence. Another application of the residue theorem yields, for $0<c=$ Re $s<1$,
\begin{align}\label{resanh}
J(z,\alpha,n)&=\int_{c-i\infty}^{c+i\infty}\frac{\Gamma\left(\frac{s}{2}\right)}{\sin\pi s}{}_1F_{1}\left(\frac{1-s}{2};\frac{1}{2};\frac{z^2}{4}\right)(\sqrt{\pi}\alpha n)^{-s}\, ds\nonumber\\
&\quad\quad\quad+2\pi i\lim_{s\to 1}\frac{(s-1)\Gamma\left(\frac{s}{2}\right)}{\sin\pi s}{}_1F_{1}\left(\frac{1-s}{2};\frac{1}{2};\frac{z^2}{4}\right)(\sqrt{\pi}\alpha n)^{-s}.\nonumber\\
\end{align}
It is well-known that \cite[p.~91, Equation (3.3.10)]{kp}
\begin{equation}\label{osinmel}
\frac{1}{2\pi i}\int_{c-i\infty}^{c+i\infty}\frac{x^{-s}}{\sin\pi s}\, ds=\frac{1}{\pi (1+x)}.
\end{equation}
Also, from \cite[p.83, Equation (3.1.13)]{kp}, we have
\begin{equation}\label{melconv}
\frac{1}{2\pi i}\int_{c-i\infty}^{c+i\infty}F(s)G(s)w^{-s}\, ds=\int_{0}^{\infty}f(x)g\left(\frac{w}{x}\right)\frac{dx}{x},
\end{equation}
where $F(s)$ and $G(s)$ are Mellin transforms of $f(x)$ and $g(x)$ respectively.
Hence, from (\ref{invmel1}), (\ref{osinmel}) and (\ref{melconv}), we have
\begin{align}\label{aar1}
\int_{c-i\infty}^{c+i\infty}\frac{\Gamma\left(\frac{s}{2}\right)}{\sin\pi s}{}_1F_{1}\left(\frac{1-s}{2};\frac{1}{2};\frac{z^2}{4}\right)(\sqrt{\pi}\alpha n)^{-s}\, ds=4ie^{z^{2}/4}\int_{0}^{\infty}\frac{e^{-x^2}\cos xz}{x+\sqrt{\pi}n\alpha}\, dx.
\end{align}
Also,
\begin{equation}\label{aal1}
\lim_{s\to 1}\frac{(s-1)\Gamma\left(\frac{s}{2}\right)}{\sin\pi s}{}_1F_{1}\left(\frac{1-s}{2};\frac{1}{2};\frac{z^2}{4}\right)(\sqrt{\pi}\alpha n)^{-s}=-\frac{1}{\pi n\alpha}.
\end{equation}
Thus, (\ref{resanh}), (\ref{aar1}) and (\ref{aal1}) imply that
\begin{align}\label{resanh1}
J(z,n,\alpha)=4ie^{z^{2}/4}\left(\int_{0}^{\infty}\frac{e^{-x^2}\cos xz}{x+\sqrt{\pi}n\alpha}\, dx-\frac{e^{-z^2}/4}{2n\alpha}\right).
\end{align}
Rewriting $e^{-z^2/4}$ as an integral, we have
\begin{equation}\label{inrep}
e^{-z^2/4}=\frac{2}{\sqrt{\pi}}\int_{0}^{\infty}e^{-x^2}\cos xz\, dx.
\end{equation}
Now (\ref{resanh1}) along with (\ref{inrep}), (\ref{hcalj0}) and (\ref{w1.15b})
give
\begin{align}\label{vlli}
J(z,\alpha)&=4ie^{z^{2}/4}\sum_{n=1}^{\infty}\int_{0}^{\infty}e^{-x^2}\cos(xz)\left\{\frac{1}{x+\sqrt{\pi} n\alpha}-\frac{1}{\sqrt{\pi}n\alpha}\right\}\, dx\nonumber\\
&=-4ie^{z^{2}/4}\sum_{n=1}^{\infty}\int_{0}^{\infty}e^{-\pi\alpha^2 x^2}\cos(\sqrt{\pi}\alpha xz)\left\{\frac{1}{n}-\frac{1}{x+n}\right\}\, dx\nonumber\\
&=-4ie^{z^{2}/4}\int_{0}^{\infty}e^{-\pi\alpha^2 x^2}\cos(\sqrt{\pi}\alpha xz)\sum_{n=0}^{\infty}\left\{\frac{1}{n+1}-\frac{1}{x+1+n}\right\}\, dx\nonumber\\
&=-4ie^{z^{2}/4}\int_{0}^{\infty}\left(\psi(x+1)+\gamma\right)e^{-\pi\alpha^2 x^2}\cos(\sqrt{\pi}\alpha xz)\, dx\nonumber\\
&=-4ie^{z^{2}/4}\left(\frac{\gamma e^{-z^2/4}}{2\alpha}+\int_{0}^{\infty}\psi(x+1)e^{-\pi\alpha^2 x^2}\cos(\sqrt{\pi}\alpha xz)\, dx\right),
\end{align}%
where in the second step we made the change of variable $x\to\sqrt{\pi}\alpha x$ and in the third step, we interchanged the order of summation and integration, which is valid because of absolute convergence. Thus from (\ref{nsphar}), (\ref{resimpact2}), (\ref{vll}) and (\ref{vlli}), we have
\begin{align}\label{beffis}
&\int_{0}^{\infty}\frac{\Xi(\tf{1}{2}t)}{1+t^2}\frac{\nabla\left(\alpha, z, \frac{1+it}{2}\right)}{\cosh \tf{1}{2}\pi t}\, dt\nonumber\\
&=-\frac{\alpha^{\frac{1}{2}}e^{-\frac{z^2}{8}}}{4i}\bigg\{-4ie^{z^{2}/4}\left(\frac{\gamma e^{-z^2/4}}{2\alpha}+\int_{0}^{\infty}\psi(x+1)e^{-\pi\alpha^2 x^2}\cos(\sqrt{\pi}\alpha xz)\, dx\right)\nonumber\\
&\quad\quad\quad\quad\quad\quad-\frac{i}{\alpha}\left(-\gamma+\log\left(4\pi\alpha^2\right)+\frac{z^2}{2}{}_2F_{2}(1,1;3/2,2;z^2/4)\right)\bigg\}\nonumber\\
&=\sqrt{\alpha}e^{\frac{z^2}{8}}\int_{0}^{\infty}\psi(x+1)e^{-\pi\alpha^2 x^2}\cos(\sqrt{\pi}\alpha xz)\, dx\nonumber\\
&+\quad\frac{e^{-\frac{z^2}{8}}}{4\sqrt{\alpha}}\left(\gamma+\log\left(4\pi\alpha^2\right)+\frac{z^2}{2}{}_2F_{2}(1,1;3/2,2;z^2/4)\right).
\end{align}
Finally, replacing $z$ by $iz$ and $\alpha$ by $\beta$ in (\ref{beffis}), noting that the integral on the left-hand side remains invariant in this process, and then combining the result with (\ref{beffis}), we arrive at 
{\allowdisplaybreaks\begin{align}\label{beffis1}
&\int_{0}^{\infty}\frac{\Xi(\tf{1}{2}t)}{1+t^2}\frac{\nabla\left(\alpha, z, \frac{1+it}{2}\right)}{\cosh \tf{1}{2}\pi t}\, dt\nonumber\\
&=\sqrt{\alpha}e^{\frac{z^2}{8}}\int_{0}^{\infty}\psi(x+1)e^{-\pi\alpha^2 x^2}\cos(\sqrt{\pi}\alpha xz)\, dx\nonumber\\
&+\quad\frac{e^{-\frac{z^2}{8}}}{4\sqrt{\alpha}}\left(\gamma+\log\left(4\pi\alpha^2\right)+\frac{z^2}{2}{}_2F_{2}(1,1;3/2,2;z^2/4)\right)\nonumber\\
&=\sqrt{\beta}e^{-\frac{z^2}{8}}\int_{0}^{\infty}\psi(x+1)e^{-\pi\beta^2 x^2}\cosh(\sqrt{\pi}\beta xz)\, dx\nonumber\\
&+\quad\frac{e^{\frac{z^2}{8}}}{4\sqrt{\beta}}\left(\gamma+\log\left(4\pi\beta^2\right)-\frac{z^2}{2}{}_2F_{2}(1,1;3/2,2;-z^2/4)\right).
\end{align}}%
We can rewrite (\ref{beffis1}) in a more compact form by means of the integral evaluation
\begin{equation}\label{intimp}
\int_{0}^{\infty}e^{-\pi\alpha^2 x^2}\cos(\sqrt{\pi}\alpha xz)\log x\, dx=-\frac{e^{-\frac{z^2}{4}}}{4\alpha}\left(\gamma+\log\left(4\pi\alpha^2\right)+\frac{z^2}{2}{}_2F_{2}(1,1;3/2,2;z^2/4)\right),
\end{equation}
which can be proved by expanding $\cos(\sqrt{\pi}\alpha xz)$ into infinite series, interchanging the order of summation and integration and then employing the following formula \cite[p.573, formula 4.352, no. 3]{grn}, valid for Re $\mu>0$,
\begin{equation*}
\int_{0}^{\infty}x^{n-\frac{1}{2}}e^{-\mu x}\log x\, dx=\sqrt{\pi}\frac{(2n-1)!!}{2^n\mu^{n+\frac{1}{2}}}\left[2\left(1+\frac{1}{3}+\cdots+\frac{1}{2n-1}\right)-\gamma-\log 4\mu\right]
\end{equation*}
along with the fact that
\begin{equation*}
\sum_{n=0}^{\infty}\frac{(-z^2/4)^n}{n!}\left(1+\frac{1}{3}+\cdots\frac{1}{2n-1}\right)=-\frac{z^2}{4}e^{-z^2/4}{}_2F_{2}(1,1;3/2,2;z^2/4),
\end{equation*}
which in turn can be proved by reversing the steps in (\ref{hypcal1}) and using (\ref{kft}). Combining (\ref{intimp}) with (\ref{beffis1}), we obtain (\ref{hfg}).
\section{Generalization of Ferrar's formula}
Using (\ref{xif}), (\ref{xi}), (\ref{nabla}), (\ref{rho}), (\ref{strivert}) and (\ref{conasy}), we easily see that the integral on the extreme right-hand side of (\ref{gf}) converges. Let $\phi(s)=\frac{\sqrt{2}}{\frac{1}{2}-s}\Gamma\left(\frac{1}{4}+\frac{s}{2}\right)$ so that $f(\frac{t}{2})=\phi\left(\frac{it}{2}\right)\phi\left(\frac{-it}{2}\right)=\frac{8}{1+t^{2}}\Gamma\left(\frac{1+it}{4}\right)\Gamma\left(\frac{1-it}{4}\right)$. Substituting this in (\ref{sp4}) and using (\ref{xi}) and (\ref{rho}), we have
\begin{align}\label{nsp4.5}
&8\int_{0}^{\infty}\Gamma\left(\frac{1+it}{4}\right)\Gamma\left(\frac{1-it}{4}\right)\frac{\Xi(t/2)}{1+t^2}\nabla\left(\alpha,z,\frac{1+it}{2}\right)\, dt\nonumber\\
&=2i\int_{\frac{1}{2}-i\infty}^{\frac{1}{2}+i\infty}\pi^{-\frac{s}{2}}\Gamma^{2}\left(\frac{s}{2}\right)\Gamma\left(\frac{1-s}{2}\right)\zeta(s)\rho(\alpha,z,s)\, ds\nonumber\\
&=2i\alpha^{\frac{1}{2}}e^{-\frac{z^2}{8}}\int_{\frac{1}{2}-i\infty}^{\frac{1}{2}+i\infty}\Gamma^{2}\left(\frac{s}{2}\right)\Gamma\left(\frac{1-s}{2}\right)\zeta(s){}_1F_{1}\left(\frac{1-s}{2};\frac{1}{2};\frac{z^2}{4}\right)(\sqrt{\pi}\alpha)^{-s}\, ds,
\end{align}
To evaluate the last integral, we wish to shift the line of integration from Re $s=1/2$ to Re $s=1+\delta$, $0<\delta<2$, so that we can use (\ref{zzdefgr}). Consider a positively oriented rectangular contour with sides $[\frac{1}{2}+iT, \frac{1}{2}-iT], [\frac{1}{2}-iT, 1+\delta-iT], [1+\delta-iT,1+\delta+iT]$ and $[1+\delta+iT,\frac{1}{2}+iT]$, where $T$ is any positive real number. While shifting the line of integration, we encounter the pole of order two at $s=1$ (due to $\Gamma\left(\frac{1-s}{2}\right)$ and $\zeta(s)$). Using the residue theorem, we have
\begin{align}\label{resimpact1}
&\int_{\frac{1}{2}-iT}^{\frac{1}{2}+iT}\Gamma^{2}\left(\frac{s}{2}\right)\Gamma\left(\frac{1-s}{2}\right)\zeta(s){}_1F_{1}\left(\frac{1-s}{2};\frac{1}{2};\frac{z^2}{4}\right)(\sqrt{\pi}\alpha)^{-s}\, ds\nonumber\\
&=\left[\int_{\tf{1}{2}-iT}^{1+\delta-iT}+\int_{1+\delta-iT}^{1+\delta+iT}+\int_{1+\delta+iT}^{\tf{1}{2}+iT}\right]\Gamma^{2}\left(\frac{s}{2}\right)\Gamma\left(\frac{1-s}{2}\right)\zeta(s){}_1F_{1}\left(\frac{1-s}{2};\frac{1}{2};\frac{z^2}{4}\right)(\sqrt{\pi}\alpha)^{-s}\, ds\nonumber\\
&\quad\quad-2\pi iL,
\end{align}
where
\begin{align}\label{L}
L&:=\lim_{s\to 1}\frac{d}{ds}\left((s-1)^2\Gamma^{2}\left(\frac{s}{2}\right)\Gamma\left(\frac{1-s}{2}\right)\zeta(s){}_1F_{1}\left(\frac{1-s}{2};\frac{1}{2};\frac{z^2}{4}\right)(\sqrt{\pi}\alpha)^{-s}\right)\nonumber\\
&=L_{1}+L_{2},
\end{align}
with
\begin{align*}
L_{1}&:=\lim_{s\to 1}\left\{\frac{d}{ds}\left((s-1)^2\Gamma^{2}\left(\frac{s}{2}\right)\Gamma\left(\frac{1-s}{2}\right)\zeta(s)(\sqrt{\pi}\alpha)^{-s}\right){}_1F_{1}\left(\frac{1-s}{2};\frac{1}{2};\frac{z^2}{4}\right)\right\}\nonumber\\
L_{2}&:=\lim_{s\to 1}\left\{(s-1)^2\Gamma^{2}\left(\frac{s}{2}\right)\Gamma\left(\frac{1-s}{2}\right)\zeta(s)(\sqrt{\pi}\alpha)^{-s}\frac{d}{ds}{}_1F_{1}\left(\frac{1-s}{2};\frac{1}{2};\frac{z^2}{4}\right)\right\}.
\end{align*}
Now $L_{1}$ can be easily computed, or from \cite[Equation (4.12)]{series}, we readily have
\begin{align}\label{compul1}
L_{1}&=\lim_{s\to 1}\frac{d}{ds}\left((s-1)^2\Gamma^{2}\left(\frac{s}{2}\right)\Gamma\left(\frac{1-s}{2}\right)\zeta(s)\left(\sqrt{\pi}\alpha\right)^{-s}\right)\nonumber\\
&=\frac{\sqrt{\pi}}{\alpha}\left(\log 16\pi+2\log\alpha-\gamma\right).
\end{align}
Also, from (\ref{hypcal1}) and the fact that $\Gamma(1/2)=\sqrt{\pi}$, we find that
\begin{align}\label{compul2}
L_{2}&=-\frac{z^2}{4}{}_2F_{2}(1,1;3/2,2;z^2/4)\lim_{s\to 1}\left\{(s-1)^2\Gamma^{2}\left(\frac{s}{2}\right)\Gamma\left(\frac{1-s}{2}\right)\zeta(s)(\sqrt{\pi}\alpha)^{-s}\right\}\nonumber\\
&=\frac{z^2\sqrt{\pi}}{2\alpha}{}_2F_{2}(1,1;3/2,2;z^2/4).
\end{align}
Finally, from (\ref{L}), (\ref{compul1}) and (\ref{compul2}), we have
\begin{equation}\label{fcoml}
L=\frac{\sqrt{\pi}}{\alpha}\left(-\gamma+\log 16\pi+2\log\alpha+\frac{z^2}{2}{}_2F_{2}(1,1;3/2,2;z^2/4)\right).
\end{equation}
Using (\ref{strivert}), one easily sees that the integrals on the horizontal segments $[\tf{1}{2}-iT,1+\delta-iT]$ and $[1+\delta+iT, \tf{1}{2}+iT]$ tend to zero as $T\to\infty$. Thus it remains to evaluate 
\begin{align}\label{calj0}
J(z,\alpha)&:=\int_{1+\delta-i\infty}^{1+\delta+i\infty}\Gamma^{2}\left(\frac{s}{2}\right)\Gamma\left(\frac{1-s}{2}\right)\zeta(s){}_1F_{1}\left(\frac{1-s}{2};\frac{1}{2};\frac{z^2}{4}\right)(\sqrt{\pi}\alpha)^{-s}\, ds\nonumber\\
&=\sqrt{\pi}\sum_{n=1}^{\infty}\int_{1+\delta-i\infty}^{1+\delta+i\infty}B\left(\frac{s}{2},\frac{1-s}{2}\right)\Gamma\left(\frac{s}{2}\right){}_1F_{1}\left(\frac{1-s}{2};\frac{1}{2};\frac{z^2}{4}\right)(\sqrt{\pi}\alpha n)^{-s}\, ds\nonumber\\
&=:\sum_{n=1}^{\infty}J(z,\alpha,n).
\end{align}
Here $B(s,z-s)$ is the Euler beta function given by
\begin{equation}\label{betamel}
B(s,z-s)=\int_{0}^{\infty}\frac{x^{s-1}}{(1+x)^{z}}\, dx=\frac{\Gamma(s)\Gamma(z-s)}{\Gamma(z)},\hspace{2mm}0<\hspace{1mm}\text{Re}\hspace{1mm}s<\hspace{1mm}\text{Re}\hspace{1mm}z.
\end{equation}
Another application of the residue theorem yields, for $0<c=$ Re $s<1$,
\begin{align}\label{resan}
J(z,\alpha,n)&=\sqrt{\pi}\bigg(\int_{c-i\infty}^{c+i\infty}B\left(\frac{s}{2},\frac{1-s}{2}\right)\Gamma\left(\frac{s}{2}\right){}_1F_{1}\left(\frac{1-s}{2};\frac{1}{2};\frac{z^2}{4}\right)(\sqrt{\pi}\alpha n)^{-s}\, ds\nonumber\\
&\quad\quad\quad+2\pi i\lim_{s\to 1}\frac{(s-1)}{\sqrt{\pi}}\Gamma^{2}\left(\frac{s}{2}\right)\Gamma\left(\frac{1-s}{2}\right){}_1F_{1}\left(\frac{1-s}{2};\frac{1}{2};\frac{z^2}{4}\right)(\sqrt{\pi}\alpha n)^{-s}\bigg)\nonumber\\
&=\sqrt{\pi}\bigg(\int_{c-i\infty}^{c+i\infty}B\left(\frac{s}{2},\frac{1-s}{2}\right)\Gamma\left(\frac{s}{2}\right){}_1F_{1}\left(\frac{1-s}{2};\frac{1}{2};\frac{z^2}{4}\right)(\sqrt{\pi}\alpha n)^{-s}\, ds-\frac{4\pi i}{n\alpha}\bigg).
\end{align}
From (\ref{betamel}), we have for $0<c=$ Re $s<1$,
\begin{equation}\label{betamel1}
\frac{1}{2\pi i}\int_{c-i\infty}^{c+i\infty}B\left(\frac{s}{2},\frac{1-s}{2}\right)x^{-s}\, ds=\frac{2}{\sqrt{1+x^2}}.
\end{equation}
Now using (\ref{invmel1}), (\ref{betamel1}) and (\ref{melconv}), we see that
\begin{align}\label{calj}
\int_{c-i\infty}^{c+i\infty}B\left(\frac{s}{2},\frac{1-s}{2}\right)\Gamma\left(\frac{s}{2}\right){}_1F_{1}\left(\frac{1-s}{2};\frac{1}{2};\frac{z^2}{4}\right)(\sqrt{\pi}\alpha n)^{-s}\, ds=8\pi ie^{z^2/4}\int_{0}^{\infty}\frac{e^{-x^2}\cos xz}{\sqrt{x^2+\pi \alpha^2n^2}}\, dx.
\end{align}
Hence, from (\ref{calj0}), (\ref{resan}) and (\ref{calj}), we deduce that
\begin{equation}\label{calj1}
J(z,\alpha)=8\pi^{3/2}ie^{z^2/4}\sum_{n=1}^{\infty}\left(\int_{0}^{\infty}\frac{e^{-x^2}\cos xz}{\sqrt{x^2+\pi \alpha^2n^2}}\, dx-\frac{e^{-z^2/4}}{2n\alpha}\right).
\end{equation}
Substituting (\ref{inrep}) in (\ref{calj1}) and then employing the change of variable $x=\alpha t/(2\sqrt{\pi})$, we have
\begin{align}\label{j1rep}
J(z,\alpha)&=8\pi^{3/2}ie^{z^2/4}\sum_{n=1}^{\infty}\int_{0}^{\infty}e^{-x^2}\cos xz\left(\frac{1}{\sqrt{x^2+\pi \alpha^2n^2}}-\frac{1}{\sqrt{\pi}n\alpha}\right)\, dx\nonumber\\
&=8\pi^{3/2}ie^{z^2/4}\sum_{n=1}^{\infty}\int_{0}^{\infty}e^{-\frac{\alpha^2t^2}{4\pi}}\cos\left(\frac{\alpha tz}{2\sqrt{\pi}}\right)\left(\frac{1}{\sqrt{t^2+4\pi^2n^2}}-\frac{1}{2\pi n}\right)\, dt\nonumber\\
&=8\pi^{3/2}ie^{z^2/4}\int_{0}^{\infty}e^{-\frac{\alpha^2t^2}{4\pi}}\cos\left(\frac{\alpha tz}{2\sqrt{\pi}}\right)\sum_{n=1}^{\infty}\left(\frac{1}{\sqrt{t^2+4\pi^2n^2}}-\frac{1}{2\pi n}\right)\, dt.
\end{align}
Now from \cite[Equation 6]{wats}, we have, for Re $z>0$,
\begin{equation}\label{wat}
2\sum_{n=1}^{\infty}K_{0}(nz)=\pi\left\{\frac{1}{z}+2\sum_{n=1}^{\infty}\left(\frac{1}{\sqrt{z^2+4\pi^2n^2}}-\frac{1}{2n\pi}\right)\right\}+\gamma+\log\left(\frac{z}{2}\right)-\log 2\pi.
\end{equation}
From (\ref{j1rep}) and (\ref{wat}), we have
\begin{align}\label{jaftw}
J(z,\alpha)&=8\pi^{3/2}ie^{z^2/4}\int_{0}^{\infty}e^{-\frac{\alpha^2t^2}{4\pi}}\cos\left(\frac{\alpha tz}{2\sqrt{\pi}}\right)\left(\frac{1}{2\pi}\left(-\gamma+\log 4\pi-\log t+2\sum_{n=1}^{\infty}K_{0}(nt)\right)-\frac{1}{2t}\right)\, dt\nonumber\\
&=:8\pi^{3/2}ie^{z^2/4}\left(J_{1}(z,\alpha)+J_{2}(z,\alpha)+J_{3}(z,\alpha)\right), 
\end{align}
where
\begin{align*}
J_{1}(z,\alpha)&:=\frac{(-\gamma+\log 4\pi)}{2\pi}\int_{0}^{\infty}e^{-\frac{\alpha^2t^2}{4\pi}}\cos\left(\frac{\alpha tz}{2\sqrt{\pi}}\right)\, dt,\nonumber\\
J_{2}(z,\alpha)&:=-\frac{1}{2\pi}\int_{0}^{\infty}e^{-\frac{\alpha^2t^2}{4\pi}}\cos\left(\frac{\alpha tz}{2\sqrt{\pi}}\right)\log t\, dt,\nonumber\\
J_{3}(z,\alpha)&:=\frac{1}{\pi}\int_{0}^{\infty}e^{-\frac{\alpha^2t^2}{4\pi}}\cos\left(\frac{\alpha tz}{2\sqrt{\pi}}\right)\left(\sum_{n=1}^{\infty}K_{0}(nt)-\frac{\pi}{2t}\right)\, dt.\nonumber\\
\end{align*}
However, from \cite[p.~488, formula 3.896, no. 4]{grn}
\begin{equation}\label{j1e}
J_{1}(z,\alpha)=\frac{e^{-z^2/4}}{2\alpha}(-\gamma+\log 4\pi)
\end{equation}
and by using (\ref{intimp}), we find that
\begin{equation}\label{j2e}
J_{2}(z,\alpha)=\frac{e^{-z^2/4}}{4\alpha}\left(\gamma+2\log\alpha-\log\pi+\frac{z^2}{2}{}_2F_{2}(1,1;3/2,2;z^2/4)\right).
\end{equation}
Thus, from (\ref{nsp4.5}), (\ref{resimpact1}), (\ref{fcoml}), (\ref{jaftw}), (\ref{j1e}) and (\ref{j2e}), we deduce that
\begin{align*}
&8\int_{0}^{\infty}\Gamma\left(\frac{1+it}{4}\right)\Gamma\left(\frac{1-it}{4}\right)\frac{\Xi(t/2)}{1+t^2}\nabla\left(\alpha,z,\frac{1+it}{2}\right)\, dt\nonumber\\
&=-4\pi^{3/2}\alpha^{\frac{1}{2}}e^{-\frac{z^2}{8}}\bigg\{\frac{2}{\alpha}(-\gamma+\log 4\pi)+\frac{1}{\alpha}\left(\gamma+2\log\alpha-\log\pi+\frac{z^2}{2}{}_2F_{2}(1,1;3/2,2;z^2/4)\right)\nonumber\\
&\quad\quad\quad\quad\quad\quad\quad\quad+\frac{4e^{z^2/4}}{\pi}\int_{0}^{\infty}e^{-\frac{\alpha^2t^2}{4\pi}}\cos\left(\frac{\alpha tz}{2\sqrt{\pi}}\right)\left(\sum_{n=1}^{\infty}K_{0}(nt)-\frac{\pi}{2t}\right)\, dt\nonumber\\
&\quad\quad\quad\quad\quad\quad\quad\quad-\frac{1}{\alpha}\left(-\gamma+\log 16\pi+2\log\alpha+\frac{z^2}{2}{}_2F_{2}(1,1;3/2,2;z^2/4)\right)\bigg\}\nonumber\\
&=-16\sqrt{\pi}\sqrt{\alpha}e^{\frac{z^2}{8}}\int_{0}^{\infty}e^{-\frac{\alpha^2t^2}{4\pi}}\cos\left(\frac{\alpha tz}{2\sqrt{\pi}}\right)\left(\sum_{n=1}^{\infty}K_{0}(nt)-\frac{\pi}{2t}\right)\, dt,
\end{align*}
so that
\begin{align}\label{evfm}
&-\frac{1}{2\sqrt{\pi}}\int_{0}^{\infty}\Gamma\left(\frac{1+it}{4}\right)\Gamma\left(\frac{1-it}{4}\right)\frac{\Xi(t/2)}{1+t^2}\nabla\left(\alpha,z,\frac{1+it}{2}\right)\, dt.\nonumber\\
&=\sqrt{\alpha}e^{\frac{z^2}{8}}\int_{0}^{\infty}e^{-\frac{\alpha^2t^2}{4\pi}}\cos\left(\frac{\alpha tz}{2\sqrt{\pi}}\right)\left(\sum_{n=1}^{\infty}K_{0}(nt)-\frac{\pi}{2t}\right)\, dt.\nonumber\\
\end{align}
Finally, replacing $z$ by $iz$ and $\alpha$ by $\beta$ in (\ref{evfm}), noting that the integral on the left-hand side remains invariant under this replacement, and then combining the result with (\ref{evfm}), we arrive at (\ref{gf}).\\

\textbf{Remark.} The special case (\ref{ferex}) of (\ref{gf}) can be derived as follows. Let $z=0$ in (\ref{gf}). Then,
\begin{align}\label{gf0}
\sqrt{\alpha}\int_{0}^{\infty}e^{-\frac{\alpha^2t^2}{4\pi}}\left(\sum_{n=1}^{\infty}K_{0}(nt)-\frac{\pi}{2t}\right)\, dt
&=\sqrt{\beta}\int_{0}^{\infty}e^{-\frac{\beta^2t^2}{4\pi}}\left(\sum_{n=1}^{\infty}K_{0}(nt)-\frac{\pi}{2t}\right)\, dt\nonumber\\
&=-\frac{1}{\sqrt{\pi}}\int_{0}^{\infty}\Gamma\left(\frac{1+it}{4}\right)\Gamma\left(\frac{1-it}{4}\right)\Xi\left(\frac{t}{2}\right)\frac{\cos\left(\frac{1}{2}t\log\alpha\right)}{1+t^2}\, dt.\nonumber\\
\end{align}
From the invariance of the integral under the map $\alpha\to\beta$, it suffices to show the equality of the extreme left and right expressions in (\ref{ferex}). To that end, observe that using (\ref{wat}) in the extreme left and right sides of (\ref{gf0}) and using (\ref{j1e}) and (\ref{j2e}), we have
{\allowdisplaybreaks\begin{align}\label{ferosolv}
&4\pi^{-\frac{3}{2}}\int_{0}^{\infty}\Gamma\left(\frac{1+it}{4}\right)\Gamma\left(\frac{1-it}{4}\right)\Xi\left(\frac{t}{2}\right)\frac{\cos\left(\frac{1}{2}t\log\alpha\right)}{1+t^2}\, dt\nonumber\\
&=-4\sqrt{\alpha}\int_{0}^{\infty}e^{-\frac{\alpha^2t^2}{4\pi}}\left(\frac{\gamma-\log 4\pi}{2\pi}+\frac{\log t}{2\pi}+\sum_{n=1}^{\infty}\left(\frac{1}{\sqrt{t^2+4\pi^2n^2}}-\frac{1}{2n\pi}\right)\right)\, dt\nonumber\\
&=4\sqrt{\alpha}\left(J_{1}(0,\alpha)+J_{2}(0,\alpha)-\int_{0}^{\infty}e^{-\frac{\alpha^2t^2}{4\pi}}\sum_{n=1}^{\infty}\left(\frac{1}{\sqrt{t^2+4\pi^2n^2}}-\frac{1}{2n\pi}\right)\, dt\right)\nonumber\\
&=4\sqrt{\alpha}\left(\frac{-\gamma+\log 4\pi}{2\alpha}+\frac{1}{4\alpha}\left(\gamma+2\log\alpha-\log\pi\right)-\sum_{n=1}^{\infty}\int_{0}^{\infty}e^{-\frac{\alpha^2t^2}{4\pi}}\left(\frac{1}{\sqrt{t^2+4\pi^2n^2}}-\frac{1}{2n\pi}\right)\, dt\right)\nonumber\\
&=4\sqrt{\alpha}\left(-\frac{\gamma}{4\alpha}+\frac{\log\alpha}{2\alpha}+\frac{\log 16\pi}{4\alpha}-\sum_{n=1}^{\infty}\left(\int_{0}^{\infty}\frac{e^{-\frac{\alpha^2t^2}{4\pi}}\, dt}{\sqrt{t^2+4\pi^2n^2}}-\frac{1}{2n\alpha}\right)\right).
\end{align}}%
Employing a change of variable $x=t^2$ in the formula \cite[p.~351, formula 3.388, no. 2]{grn}
\begin{equation*}
\int_{0}^{\infty}(2bx+x^2)^{\nu-1}e^{-\mu x}\, dx=\frac{1}{\sqrt{\pi}}\left(\frac{2b}{\mu}\right)^{\nu-1/2}e^{bu}\Gamma(\nu)K_{\nu-\frac{1}{2}}(b\mu),
\end{equation*}
valid for $|\arg b|<\pi$, Re $\nu>0$, Re $\mu>0$ and then letting $\nu=\tfrac{1}{2}$, $\mu=\frac{\alpha^2}{4\pi}$ and $b=2\pi^2n^2$, we observe that
\begin{equation}\label{fersolv}
\int_{0}^{\infty}\frac{e^{-\frac{\alpha^2t^2}{4\pi}}\, dt}{\sqrt{t^2+4\pi^2n^2}}=\frac{1}{2}e^{\frac{\pi\alpha^{2}n^2}{2}}K_{0}\left(\frac{\pi\alpha^{2}n^2}{2}\right).
\end{equation}
Substituting (\ref{fersolv}) in (\ref{ferosolv}) and simplifying, we arrive at
\begin{align}\label{ferex1}
&\sqrt{\alpha}\left(\frac{-\gamma+\log 16\pi+2\log\alpha}{\alpha}-2\sum_{n=1}^{\infty}\left(e^{\frac{\pi \alpha^2n^2}{2}}K_{0}\left(\frac{\pi \alpha^2n^2}{2}\right)-\frac{1}{n\alpha}\right)\right)\nonumber\\
&=4\pi^{-\frac{3}{2}}\int_{0}^{\infty}\Gamma\left(\frac{1+it}{4}\right)\Gamma\left(\frac{1-it}{4}\right)\Xi\left(\frac{t}{2}\right)\frac{\cos\left(\frac{1}{2}t\log\alpha\right)}{1+t^2}\, dt.
\end{align}
Now replace $\alpha$ by $\beta$ in (\ref{ferex1}) and note the invariance of the integral on the right-hand side under this transformation. This gives (\ref{ferex}).

Since the above steps are reversible, (\ref{gf0}) is equivalent to (\ref{ferex}).
\section{Generalization of a formula of Ramanujan}
Let 
\begin{equation*}
K(z,\alpha):=\int_{-\infty}^{\infty}\Gamma\left(\frac{-1+it}{4}\right)\Gamma\left(\frac{-1-it}{4}\right)\Xi\left(\frac{t}{2}\right)\rho\left(\alpha,z,\frac{3+it}{2}\right)\, dt,
\end{equation*}
where $\rho$ is defined in (\ref{rho}). Using (\ref{xif}), (\ref{xi}), (\ref{rho}), (\ref{strivert}) and (\ref{conasy}), we see that $K(z,\alpha)$ converges. Converting $K(z,\alpha)$ into a complex integral by the change of variable $s=\tfrac{1+it}{2}$ and employing (\ref{rho}) and (\ref{xi}), we observe that
{\allowdisplaybreaks\begin{align}\label{kzac}
K(z,\alpha)&=\frac{2}{i}\int_{\frac{1}{2}-i\infty}^{\frac{1}{2}+i\infty}\Gamma\left(\frac{s-1}{2}\right)\Gamma\left(-\frac{s}{2}\right)\xi(s)\rho\left(\alpha,z,s+1\right)\, ds\nonumber\\
&=\frac{-4\alpha^{-\frac{1}{2}}e^{-\frac{z^2}{8}}}{i}\int_{\frac{1}{2}-i\infty}^{\frac{1}{2}+i\infty}\Gamma\left(\frac{s+1}{2}\right)\Gamma\left(1-\frac{s}{2}\right){}_1F_{1}\left(-\frac{s}{2},\frac{1}{2};\frac{z^2}{4}\right)\Gamma\left(\frac{s}{2}\right)\zeta(s)(\sqrt{\pi}\alpha)^{-s}\, ds\nonumber\\
&=4i\alpha^{-\frac{1}{2}}e^{-\frac{z^2}{8}}\int_{\frac{1}{2}-i\infty}^{\frac{1}{2}+i\infty}\frac{\pi}{\sin\tfrac{1}{2}\pi s}\Gamma\left(\frac{s+1}{2}\right){}_1F_{1}\left(-\frac{s}{2},\frac{1}{2};\frac{z^2}{4}\right)\zeta(s)(\sqrt{\pi}\alpha)^{-s}\, ds,
\end{align}}%
where we used (\ref{rf}) in the last step. Let $T>0$ be a real number. Using the residue theorem, we have
\begin{align}\label{resimpactr}
&\int_{\frac{1}{2}-iT}^{\frac{1}{2}+iT}\frac{\pi}{\sin\tfrac{1}{2}\pi s}\Gamma\left(\frac{s+1}{2}\right){}_1F_{1}\left(-\frac{s}{2},\frac{1}{2};\frac{z^2}{4}\right)\zeta(s)(\sqrt{\pi}\alpha)^{-s}\, ds\nonumber\\
&=\left[\int_{\tf{1}{2}-iT}^{1+\delta-iT}+\int_{1+\delta-iT}^{1+\delta+iT}+\int_{1+\delta+iT}^{\tf{1}{2}+iT}\right]\frac{\pi}{\sin\tfrac{1}{2}\pi s}\Gamma\left(\frac{s+1}{2}\right){}_1F_{1}\left(-\frac{s}{2},\frac{1}{2};\frac{z^2}{4}\right)\zeta(s)(\sqrt{\pi}\alpha)^{-s}\, ds\nonumber\\
&\quad-2\pi iL,
\end{align}
where
\begin{align}\label{lrc}
L&:=\lim_{s\to 1}(s-1)\zeta(s)\frac{\pi}{\sin\tfrac{1}{2}\pi s}\Gamma\left(\frac{s+1}{2}\right){}_1F_{1}\left(-\frac{s}{2},\frac{1}{2};\frac{z^2}{4}\right)(\sqrt{\pi}\alpha)^{-s}\nonumber\\
&=\frac{\sqrt{\pi}}{\alpha}{}_1F_{1}\left(-\frac{1}{2},\frac{1}{2};\frac{z^2}{4}\right).
\end{align}
As before, using (\ref{strivert}), one easily sees that the integrals on the horizontal segments $[\tf{1}{2}-iT,1+\delta-iT]$ and $[1+\delta+iT, \tf{1}{2}+iT]$ tend to zero as $T\to\infty$. Thus it remains to evaluate
\begin{align}\label{calj0r}
J(z,\alpha)&:=\int_{1+\delta-i\infty}^{1+\delta+i\infty}\frac{\pi}{\sin\tfrac{1}{2}\pi s}\Gamma\left(\frac{s+1}{2}\right){}_1F_{1}\left(-\frac{s}{2},\frac{1}{2};\frac{z^2}{4}\right)\zeta(s)(\sqrt{\pi}\alpha)^{-s}\, ds\nonumber\\
&=\sum_{n=1}^{\infty}\int_{1+\delta-i\infty}^{1+\delta+i\infty}\frac{\pi}{\sin\tfrac{1}{2}\pi s}\Gamma\left(\frac{s+1}{2}\right){}_1F_{1}\left(-\frac{s}{2},\frac{1}{2};\frac{z^2}{4}\right)(\sqrt{\pi}\alpha n)^{-s}\, ds\nonumber\\
&=\sum_{n=1}^{\infty}\int_{c-i\infty}^{c+i\infty}\frac{\pi}{\sin\tfrac{1}{2}\pi s}\Gamma\left(\frac{s+1}{2}\right){}_1F_{1}\left(-\frac{s}{2},\frac{1}{2};\frac{z^2}{4}\right)(\sqrt{\pi}\alpha n)^{-s}\, ds\nonumber\\
&=:\sum_{n=1}^{\infty}J(z,\alpha,n)
\end{align}
for $0<\delta<1$. Note that shifting the line of integration from Re $s=1+\delta$ to Re $s=c$ does not introduce any poles.

Now (\ref{invmel1}) is valid for $0<\delta<1$. Replacing $s$ by $s+1$ in (\ref{invmel1}) and letting $x=\sqrt{\pi}\alpha n$, for $0<c=$ Re $s<1$, we have 
\begin{equation}\label{shotada}
\int_{c-i\infty}^{c+i\infty}\Gamma\left(\frac{s+1}{2}\right){}_1F_{1}\left(\frac{-s}{2};\frac{1}{2};\frac{z^2}{4}\right)x^{-s}\, ds
=4\pi ixe^{-x^2+z^2/4}\cos xz.
\end{equation}
Also, from (\ref{osinmel}), we easily see that for $0<d=$ Re $s<2$,
\begin{equation}\label{melosin}
\frac{1}{2\pi i}\int_{d-i\infty}^{d+i\infty}\frac{\pi}{\sin\tfrac{1}{2}\pi s}x^{-s}\, ds=\frac{2}{1+x^2}.
\end{equation}
Hence from (\ref{melconv}), (\ref{shotada}) and (\ref{melosin}), we see that for $0<c=$ Re $s<1$,
\begin{align}\label{osn}
J(z,\alpha,n)=8\pi ie^{\frac{z^{2}}{4}}\int_{0}^{\infty}\frac{e^{-x^2}\cos xz}{1+\left(\sqrt{\pi}\alpha n/x\right)^2}\, dx.
\end{align}
Thus from (\ref{resimpactr}), (\ref{lrc}), (\ref{calj0r}) and (\ref{osn}),
{\allowdisplaybreaks\begin{align}\label{osn1}
K(z,\alpha)&=4i\alpha^{-\frac{1}{2}}e^{-\frac{z^2}{8}}\left(8\pi ie^{\frac{z^{2}}{4}}\sum_{n=1}^{\infty}\int_{0}^{\infty}\frac{e^{-x^2}\cos xz}{1+\left(\sqrt{\pi}\alpha n/x\right)^2}\, dx-\frac{2\pi^{3/2}i}{\alpha}{}_1F_{1}\left(-\frac{1}{2},\frac{1}{2};\frac{z^2}{4}\right)\right)\nonumber\\
&=-8\pi^{\frac{3}{2}}\alpha^{-\frac{1}{2}}e^{-\frac{z^2}{8}}\left(4\alpha e^{\frac{z^2}{4}}\sum_{n=1}^{\infty}\int_{0}^{\infty}\frac{t^2e^{-\pi\alpha^2t^2}\cos\left(\sqrt{\pi}\alpha tz\right)}{t^2+n^2}\, dt-\frac{1}{\alpha}{}_1F_{1}\left(-\frac{1}{2},\frac{1}{2};\frac{z^2}{4}\right)\right)\nonumber\\
&=-8\pi^{\frac{3}{2}}\alpha^{-\frac{1}{2}}e^{-\frac{z^2}{8}}\left(4\alpha e^{\frac{z^2}{4}}\int_{0}^{\infty}t^2e^{-\pi\alpha^2t^2}\cos\left(\sqrt{\pi}\alpha tz\right)\left(\sum_{n=1}^{\infty}\frac{1}{t^2+n^2}\right)\, dt-\frac{1}{\alpha}{}_1F_{1}\left(-\frac{1}{2},\frac{1}{2};\frac{z^2}{4}\right)\right).\nonumber\\
\end{align}}%
For $t\neq 0$ \cite[p.~191]{con},
\begin{equation}\label{cotid1}
\sum_{n=1}^{\infty}\frac{1}{t^2+n^2}=\frac{\pi}{t}\left(\frac{1}{e^{2\pi t}-1}-\frac{1}{2\pi t}+\frac{1}{2}\right).
\end{equation}
Substituting (\ref{cotid1}) in (\ref{osn1}), we see that
\begin{align}\label{osn2}
K(z,\alpha)&=-8\pi^{\frac{3}{2}}\alpha^{-\frac{1}{2}}e^{-\frac{z^2}{8}}\bigg(4\pi\alpha e^{\frac{z^2}{4}}\int_{0}^{\infty}\frac{te^{-\pi\alpha^2t^2}\cos\left(\sqrt{\pi}\alpha tz\right)}{e^{2\pi t}-1}\, dt-2\alpha e^{\frac{z^2}{4}}\int_{0}^{\infty}e^{-\pi\alpha^2t^2}\cos\left(\sqrt{\pi}\alpha tz\right)\, dt\nonumber\\
&\quad\quad\quad\quad+2\pi\alpha e^{\frac{z^2}{4}}\int_{0}^{\infty}te^{-\pi\alpha^2t^2}\cos\left(\sqrt{\pi}\alpha tz\right)\, dt-\frac{1}{\alpha}{}_1F_{1}\left(-\frac{1}{2},\frac{1}{2};\frac{z^2}{4}\right)\bigg).\nonumber\\
\end{align}
But from \cite[p.~488, formula 3.896, no. 4]{grn},
\begin{equation}\label{1teg}
\int_{0}^{\infty}e^{-\pi\alpha^2t^2}\cos\left(\sqrt{\pi}\alpha tz\right)\, dt=\frac{e^{-\frac{z^2}{4}}}{2\alpha}.
\end{equation}
Also, from \cite[p.~502, formula 3.952, no. 2]{grn}, we have for $a>0$,
\begin{equation}\label{newser}
\int_{0}^{\infty}xe^{-p^2x^2}\cos (ax)\, dx=\frac{1}{2p^2}-\frac{a}{4p^3}\sum_{k=0}^{\infty}\frac{(-1)^kk!}{(2k+1)!}\left(\frac{a}{p}\right)^{2k+1}
\end{equation}
However, this formula holds for any $a\in\mathbb{C}$. Hence, letting $a=\sqrt{\pi}\alpha z$ and $p=\sqrt{\pi}\alpha$ in (\ref{newser}), simplifying the right-hand side, and then using (\ref{kft}), we have 
\begin{equation}\label{2teg}
\int_{0}^{\infty}te^{-\pi\alpha^2t^2}\cos\left(\sqrt{\pi}\alpha tz\right)\, dt=\frac{e^{-\frac{z^2}{4}}}{2\pi\alpha^2}{}_1F_{1}\left(-\frac{1}{2},\frac{1}{2};\frac{z^2}{4}\right).
\end{equation}
Substituting (\ref{1teg}) and (\ref{2teg}) in (\ref{osn2}), we have
\begin{align}\label{osn3}
K(z,\alpha)&=-8\pi^{\frac{3}{2}}\alpha^{-\frac{1}{2}}e^{-\frac{z^2}{8}}\left(4\pi\alpha e^{\frac{z^2}{4}}\int_{0}^{\infty}\frac{te^{-\pi\alpha^2t^2}\cos\left(\sqrt{\pi}\alpha tz\right)}{e^{2\pi t}-1}\, dt-1\right).
\end{align}
Finally, substituting (\ref{osn3}) in (\ref{kzac}), we obtain (\ref{rgenr}).
\section{Generalization of the Ramanujan-Hardy-Littlewood conjecture}
The approach here is similar to the one used by Hardy and Littlewood in \cite{hl} and so we will be brief. Shifting the line of integration from Re $s=1+\delta$, where $\delta>0$, to $-1<c=$ Re $s<0$ in the integral on the left-hand side of (\ref{invmel1}) and replacing $n$ by $1/n$, we have
\begin{equation}\label{invmel2}
\int_{c-i\infty}^{c+i\infty}\Gamma\left(\frac{s}{2}\right){}_1F_{1}\left(\frac{1-s}{2};\frac{1}{2};\frac{z^2}{4}\right)\left(\frac{\sqrt{\pi}\alpha}{n}\right)^{-s}\, ds=4\pi ie^{\frac{z^2}{4}}\left(e^{-\frac{\pi\alpha^2}{n^2}}\cos\left(\frac{\sqrt{\pi}\alpha z}{n}\right)-1\right).
\end{equation}
Note that one version of the prime number theorem reads $\sum_{n=1}^{\infty}\frac{\mu(n)}{n}=0$. Using these two facts in the calculation that follows, we have
\begin{align}\label{musim}
&\sqrt{\alpha}e^{\frac{z^2}{4}}\sum_{n=1}^{\infty}\frac{\mu(n)}{n}e^{-\frac{\pi\alpha^2}{n^2}}\cos\left(\frac{\sqrt{\pi}\alpha z}{n}\right)\nonumber\\
&=\sqrt{\alpha}e^{\frac{z^2}{4}}\sum_{n=1}^{\infty}\frac{\mu(n)}{n}\left(e^{-\frac{\pi\alpha^2}{n^2}}\cos\left(\frac{\sqrt{\pi}\alpha z}{n}\right)-1\right)\nonumber\\
&=\frac{\sqrt{\alpha}}{4\pi i}\sum_{n=1}^{\infty}\frac{\mu(n)}{n}\int_{c-i\infty}^{c+i\infty}\Gamma\left(\frac{s}{2}\right){}_1F_{1}\left(\frac{1-s}{2};\frac{1}{2};\frac{z^2}{4}\right)\left(\frac{\sqrt{\pi}\alpha}{n}\right)^{-s}\, ds\nonumber\\
&=\frac{\sqrt{\alpha}}{4\pi i}e^{\frac{z^2}{4}}\int_{c-i\infty}^{c+i\infty}\frac{\Gamma\left(\frac{s}{2}\right)}{\zeta(1-s)}{}_1F_{1}\left(\frac{1-s}{2};\frac{1}{2};\frac{z^2}{4}\right)\left(\sqrt{\pi}\alpha\right)^{-s}\, ds,
\end{align}
where we interchanged the order of summation and integration, which is valid because of absolute convergence, and we replaced $\sum_{n=1}^{\infty}\mu(n)n^{-(1-s)}$ by $1/\zeta(1-s)$ since Re $1-s>1$. Using (\ref{zetafe}) in (\ref{musim}), we have
\begin{equation}\label{musim1}
\sqrt{\alpha}e^{\frac{z^2}{4}}\sum_{n=1}^{\infty}\frac{\mu(n)}{n}e^{-\frac{\pi\alpha^2}{n^2}}\cos\left(\frac{\sqrt{\pi}\alpha z}{n}\right)=\frac{\sqrt{\alpha}}{4\pi^{\tfrac{3}{2} }i}\int_{c-i\infty}^{c+i\infty}\frac{\Gamma\left(\frac{1-s}{2}\right)}{\zeta(s)}{}_1F_{1}\left(\frac{1-s}{2};\frac{1}{2};\frac{z^2}{4}\right)\left(\frac{\alpha}{\sqrt{\pi}}\right)^{-s}\, ds.
\end{equation}
We want to shift the line of integration from $-1<c=$Re $s<0$ to Re $s=\lambda$, $\lambda\in(1,2)$, so that we can use the series representation $\sum_{n=1}^{\infty}\mu(n)n^{-s}$ for $1/\zeta(s)$. Consider a positively oriented rectangular contour formed by $[c-iT, \lambda-iT], [\lambda-iT, \lambda+iT], [\lambda+iT,c+iT]$ and $[c+iT,c-iT]$, where $T$ is any positive real number. In the shifting process, we encounter the non-trivial zeros of $\zeta(s)$. Hence upon the application of the residue theorem and assuming that the non-trivial zeros are simple, we get
\begin{align}\label{rhlgres}
&\int_{c-iT}^{c+iT}\frac{\Gamma\left(\frac{1-s}{2}\right)}{\zeta(s)}{}_1F_{1}\left(\frac{1-s}{2};\frac{1}{2};\frac{z^2}{4}\right)\left(\frac{\alpha}{\sqrt{\pi}}\right)^{-s}\, ds\nonumber\\
&=\left[\int_{c-iT}^{\lambda-iT}+\int_{\lambda-iT}^{\lambda+iT}+\int_{\lambda+iT}^{c+iT}\right]\frac{\Gamma\left(\frac{1-s}{2}\right)}{\zeta(s)}{}_1F_{1}\left(\frac{1-s}{2};\frac{1}{2};\frac{z^2}{4}\right)\left(\frac{\alpha}{\sqrt{\pi}}\right)^{-s}\, ds\nonumber\\
&\quad-2\pi i\sum_{-T<\rho<T}\lim_{s\to\rho}(s-\rho)\frac{\Gamma\left(\frac{1-s}{2}\right)}{\zeta(s)}{}_1F_{1}\left(\frac{1-s}{2};\frac{1}{2};\frac{z^2}{4}\right)\left(\frac{\alpha}{\sqrt{\pi}}\right)^{-s}.
\end{align}
Let $T\to\infty$ through values such that $|T-\gamma|>\exp\left(-A_{1}\gamma/\log\gamma\right)$ for every ordinate $\gamma$ of a zero of $\zeta(s)$, where $A_{1}$ is a positive constant. From \cite[p. 218, Equation (9.7.3)]{titch},
\begin{equation*}
\log|\zeta(\sigma+it)|\geq \sum_{|t-\gamma|\leq 1}\log|t-\gamma|+O(\log t).
\end{equation*}
Hence for $c<\sigma=$ Re $s<\lambda$,
\begin{equation*}
\log|\zeta(\sigma+iT)|\geq -\sum_{|T-\gamma|\leq 1}A_{1}\gamma/\log\gamma+O(\log T)>-A_{2}T,
\end{equation*}
where $A_{2}<\pi/4$ if $A_{1}$ is small enough, and $T>T_{0}$. This along with (\ref{strivert}) and (\ref{conasy}) implies that the integrals along the horizontal segments $[c-iT,\lambda-iT]$ and $[\lambda+iT, c+iT]$ tend to zero as $T\to\infty$ through the above values. It remains to evaluate
\begin{align}\label{jrhlg}
J(z,\alpha)&:=\int_{\lambda-i\infty}^{\lambda+i\infty}\frac{\Gamma\left(\frac{1-s}{2}\right)}{\zeta(s)}{}_1F_{1}\left(\frac{1-s}{2};\frac{1}{2};\frac{z^2}{4}\right)\left(\frac{\alpha}{\sqrt{\pi}}\right)^{-s}\, ds\nonumber\\
&=\sum_{n=1}^{\infty}\mu(n)\int_{\lambda-i\infty}^{\lambda+i\infty}\Gamma\left(\frac{1-s}{2}\right){}_1F_{1}\left(\frac{1-s}{2};\frac{1}{2};\frac{z^2}{4}\right)\left(\frac{\alpha n}{\sqrt{\pi}}\right)^{-s}\, ds\nonumber\\
&=:\sum_{n=1}^{\infty}\mu(n)J(z,\alpha, n).
\end{align}
Employing the change of variable $w=1-s$ in the integral in $J(z,\alpha, n)$, for $-1<\lambda'=$ Re $w<0$, we have
\begin{align}\label{jrhlg2}
J(z,\alpha,n)&=\frac{\sqrt{\pi}}{\alpha n}\int_{\lambda'-i\infty}^{\lambda'+i\infty}\Gamma\left(\frac{w}{2}\right){}_1F_{1}\left(\frac{w}{2};\frac{1}{2};\frac{z^2}{4}\right)\left(\frac{\sqrt{\pi}}{\alpha n}\right)^{-w}\, dw\nonumber\\
&=\frac{\sqrt{\pi} e^{\frac{z^2}{4}}}{\alpha n}\int_{\lambda'-i\infty}^{\lambda'+i\infty}\Gamma\left(\frac{w}{2}\right){}_1F_{1}\left(\frac{1-w}{2};\frac{1}{2};-\frac{z^2}{4}\right)\left(\frac{\sqrt{\pi}}{\alpha n}\right)^{-w}\, dw\nonumber\\
&=\frac{4\pi^{3/2}i\beta}{n}\left(e^{-\frac{\pi\beta^2}{n^2}}\cosh\left(\frac{\sqrt{\pi}\beta z}{n}\right)-1\right),
\end{align}
where in the penultimate step, we used (\ref{kft}) and in the last step, we used (\ref{invmel2}) with $z$ replaced by $iz$ and $\alpha$ replaced by $\beta$. Thus letting $T\to\infty$ in (\ref{rhlgres}) and combining with (\ref{musim1}), (\ref{jrhlg}) and (\ref{jrhlg2}), we obtain
\begin{align*}
&\sqrt{\alpha}e^{\frac{z^2}{4}}\sum_{n=1}^{\infty}\frac{\mu(n)}{n}e^{-\frac{\pi\alpha^2}{n^2}}\cos\left(\frac{\sqrt{\pi}\alpha z}{n}\right)\nonumber\\
&=-\sqrt{\beta}\sum_{n=1}^{\infty}\frac{\mu(n)}{n}\left(1-e^{-\frac{\pi\beta^2}{n^2}}\cosh\left(\frac{\sqrt{\pi}\beta z}{n}\right)\right)-\frac{\sqrt{\alpha}}{2\sqrt{\pi}}\sum_{\rho}\frac{\Gamma\left(\frac{1-\rho}{2}\right)}{\zeta^{'}(\rho)}{}_1F_{1}\left(\frac{1-\rho}{2};\frac{1}{2};\frac{z^2}{4}\right)\left(\frac{\alpha}{\sqrt{\pi}}\right)^{-\rho}.
\end{align*}
Using the prime number theorem again, we have
\begin{align}\label{0eq}
&\sqrt{\alpha}e^{\frac{z^2}{8}}\sum_{n=1}^{\infty}\frac{\mu(n)}{n}e^{-\frac{\pi\alpha^2}{n^2}}\cos\left(\frac{\sqrt{\pi}\alpha z}{n}\right)-\sqrt{\beta}e^{-\frac{z^2}{8}}\sum_{n=1}^{\infty}\frac{\mu(n)}{n}e^{-\frac{\pi\beta^2}{n^2}}\cosh\left(\frac{\sqrt{\pi}\beta z}{n}\right)\nonumber\\
&=-\frac{e^{-\frac{z^2}{8}}}{2\sqrt{\pi}\sqrt{\beta}}\sum_{\rho}\frac{\Gamma\left(\frac{1-\rho}{2}\right)}{\zeta^{'}(\rho)}{}_1F_{1}\left(\frac{1-\rho}{2};\frac{1}{2};\frac{z^2}{4}\right)\pi^{\rho/2}\beta^{\rho}.
\end{align}
We have not shown the convergence of the series in the above equation in the ordinary sense, but rather only when the terms are bracketed in such a way that two terms for which
\begin{equation*}
|\gamma-\gamma'|<\exp\left(-A_{1}\gamma/\log \gamma\right)+\exp\left(-A_{1}\gamma'/\log \gamma'\right)
\end{equation*}
are included in the same bracket. Replacing $\alpha\to\beta$ and $z\to iz$ in (\ref{0eq}), simplifying, and using (\ref{0eq}) again, we easily see that 
{\allowdisplaybreaks\begin{equation}\label{eq}
\frac{e^{\frac{z^2}{8}}}{2\sqrt{\pi}\sqrt{\alpha}}\sum_{\rho}\frac{\Gamma\left(\frac{1-\rho}{2}\right)}{\zeta^{'}(\rho)}{}_1F_{1}\left(\frac{1-\rho}{2};\frac{1}{2};-\frac{z^2}{4}\right)\pi^{\rho/2}\alpha^{\rho}+\frac{e^{-\frac{z^2}{8}}}{2\sqrt{\pi}\sqrt{\beta}}\sum_{\rho}\frac{\Gamma\left(\frac{1-\rho}{2}\right)}{\zeta^{'}(\rho)}{}_1F_{1}\left(\frac{1-\rho}{2};\frac{1}{2};\frac{z^2}{4}\right)\pi^{\rho/2}\beta^{\rho}=0.
\end{equation}}
Thus (\ref{0eq}) and (\ref{eq}) give (\ref{mrg}) upon simplification.
\section{Concluding remarks}
The integrands of all the integrals that we have considered here are not only continuous functions in both variables $t$ and $z$ but also analytic in $\mathbb{C}$ as a function of $z$ for each fixed value of $t$. Since all of these integrals converge uniformly at both limits in any compact subset of $\mathbb{C}$, from \cite[p.~31, Theorem 2.3]{temme}, we find that each of these integrals is holomorphic in $\mathbb{C}$ as a function of $z$ and that their derivatives of all orders may be found by differentiating under the sign of integration. This way, we can obtain many more transformation formulas or identities.

In this paper, we have focused on the generalization of the integral $\int_{0}^{\infty}f\left(\frac{t}{2}\right)\Xi(t)\cos\left(\tf{1}{2}t\log\alpha\right)\, dt$, where we generalized $\cos\left(\tf{1}{2}t\log\alpha\right)$ by making use of the function $\rho(\alpha,z,s)$ consisting of the confluent hypergeometric function. In \cite{riemann}, Ramanujan introduced the integral 
\begin{equation*}
\int_{0}^{\infty}\Gamma\left(\frac{z-1+it}{4}\right)\Gamma\left(\frac{z-1-it}{4}\right)
\Xi\left(\frac{t+iz}{2}\right)\Xi\left(\frac{t-iz}{2}\right)\frac{\cos\mu t}{(z+1)^2+t^2}\, dt,
\end{equation*}
where Re $z$ is not an integer and $\mu$ is real, and expressed it in terms of another integral in each of the regions \textup{Re} $s>1$, $-1<$ \textup{Re} $s<1$ and $-3<$ \textup{Re} $s<-1$. See also \cite[Theorem 1.2]{dixit}. In \cite{transf}, we examined the following general integral, with $\eta\in\mathbb{C}$,
\begin{equation}\label{lint}
\int_{0}^{\infty}f\left(\eta,\frac{t}{2}\right)\Xi\left(\frac{t+i\eta}{2}\right)\Xi\left(\frac{t-i\eta}{2}\right)\cos\left(\frac{1}{2}t\log\alpha\right)\, dt.
\end{equation}
It seems natural then to generalize the above integral by introducing in it a similar generalization of $\cos\left(\tf{1}{2}t\log\alpha\right)$ that we have utilized in this paper. Thus, replacing $\cos\left(\tf{1}{2}t\log\alpha\right)$ in (\ref{lint}) by either $\rho\left(\alpha,z,\frac{1+it}{2}\right)$ or its variants like $\rho\left(\alpha,z,\frac{3+it}{2}\right)$ (as in Theorem \ref{thmrgenr}) should lead to a generalization of the integrals in (\ref{lint}). The substitution $\rho\left(\alpha,z,\frac{1+it}{2}\right)$ will generate formulas of the type $F(\eta,z,\alpha)=F(\eta,iz,\beta)$, where $\alpha\beta=1$. Also, it looks plausible that either the substitution $\rho\left(\alpha,z,\frac{3+it}{2}\right)$ or $\rho\left(\alpha,z,\frac{1+it}{2}\right)$ should generalize Theorem 1.2 in \cite{dixit} found by Ramanujan. However, in both cases, the associated inverse Mellin transforms are difficult to evaluate. Our search in this direction did not lead to any particular nice example. Further efforts, however, may be fruitful and may result in beautiful and more general identities, for example, a generalization of the extended version of the Ramanujan-Guinand formula \cite[Theorem 1.4]{transf}. 

Another possible direction of generalizing the transformation formulas resulting from the integrals of the form $\int_{0}^{\infty}f\left(\frac{t}{2}\right)\Xi\left(\frac{t}{2}\right)\cos\left(\tf{1}{2}t\log\alpha\right)\, dt$ may be to replace the function $\rho(\alpha,z,s)$ used in this paper with an analogous one which involves the hypergeometric function ${}_2F_{2}$ instead of a ${}_1F_{1}$. This is because of the following Kummer-type transformation that exists for a ${}_2F_{2}$ \cite[Equation 4]{paris}:
\begin{equation*}
{}_2F_{2}\left(a,c+1;b,c;x\right)=e^{x}{}_2F_{2}\left(b-a-1,f+1;b,f;-x\right),
\end{equation*}
where
\begin{equation*}
f=\frac{c(1+a-b)}{a-c}.
\end{equation*}
In fact, a Kummer-type transformation also exists for the generalized hypergeometric function ${}_pF_{p}(x)$ \cite[Equation 4.2]{miller}.
\begin{center}
\textbf{Acknowledgements}
\end{center}
The author wishes to express his sincere thanks to Professor Bruce C.~Berndt for reading this manuscript in detail and for making helpful comments which improved the quality of this paper. 

\end{document}